\newtheorem{theorem}[equation]{Theorem}
\newtheorem{lemma}[equation]{Lemma}
\newtheorem{proposition}[equation]{Proposition}
\newtheorem{corollary}[equation]{Corollary}
\newtheorem{definition}[equation]{Definition}
\theoremstyle{definition}
\newtheorem{example}[equation]{Example}
\theoremstyle{remark}
\newtheorem{remark}[equation]{Remark}
\numberwithin{equation}{subsection}
\newcommand{\TT}{\mathbb{T}}
\newcommand{\CC}{\mathbb{C}}
\DeclareMathAlphabet{\matheur}{U}{eur}{m}{n}
 \DeclareMathOperator{\Lie}{Lie}
\DeclareMathOperator{\Ker}{Ker} \DeclareMathOperator{\GL}{GL}
\DeclareMathOperator{\Mat}{Mat} 
\DeclareMathOperator{\End}{End}
\DeclareMathOperator{\Id}{Id} \DeclareMathOperator{\im}{Im}
\newcommand{\ok}{\overline{k}}
\newcommand{\tr}{\mathrm{tr}}
\begin{document}

\title[]{{\large{O\MakeLowercase{n }F\MakeLowercase{urusho's analytic continuation of }D\MakeLowercase{rinfeld logarithms}}}}

\author{Yen-Tsung Chen}
\address{Department of Mathematics, National Tsing Hua University, Hsinchu City 30042, Taiwan R.O.C.}

\email{ytchen.math@gmail.com}

\thanks{Keywords: Drinfeld modules, Drinfeld logarithms, Analytic continuation}

\subjclass[2020]{11G09, 11R58, 33E50}

\date{\today}

\begin{abstract} 
    In the present paper, we establish an analytic continuation of Drinfeld logarithms by using the techniques introduced in \cite{Fur20}. This result can be seen as an analogue of the analytic continuation of the elliptic integrals of the first kind for Drinfeld modules.
\end{abstract}

\keywords{}

\maketitle

\tableofcontents

\section{Introduction}
\subsection{Motivation}
    Let $\Lambda\subset\mathbb{C}$ be a rank $2$ lattice, that is, a rank $2$ discrete free $\mathbb{Z}$-module. We consider the associated Weierstrass $\wp$-function
    \[
        \wp_\Lambda(z):=\frac{1}{z^2}+\sum_{0\neq\lambda\in\Lambda}\left(\frac{1}{(z-\lambda)^2}-\frac{1}{\lambda^2}\right).
    \]
    For all $z\in\mathbb{C}\setminus\Lambda$, the Weierstrass $\wp$-function and its derivative satisfy the cubic equation
    \[
        \wp_\Lambda'(z)^2=4\wp_\Lambda(z)^3-60G_4(\Lambda)\wp(z)-140G_6(\Lambda).
    \]
    Here $G_{2n}(\Lambda)$ for $n\in\mathbb{Z}_{>1}$ is the series
    \[
        G_{2n}(\Lambda):=\sum_{0\neq w\in\Lambda}w^{-2n}.
    \]
    Let $E_\Lambda$ be the elliptic curve defined by the following equation
    \[
        Y^2=4X^3-60G_4(\Lambda)X-140G_6(\Lambda).
    \]
    Then we have the following short exact sequence of abelian groups
    \begin{equation}\label{Eq:Elliptic_Curve}
        0\to \Lambda\hookrightarrow\mathbb{C}\overset{\exp_\Lambda(\cdot)}{\twoheadrightarrow}E_\Lambda(\mathbb{C})\to 0,
    \end{equation}
    where $\exp_\Lambda(\cdot)$ is defined by
    \[
        \exp_\Lambda(z)=\begin{cases}
            [\wp(z):\wp'(z):1],~&\mathrm{if}~z\not\in\Lambda\\
            [0:1:0],~&\mathrm{if}~z\in\Lambda.
        \end{cases}
    \]
    In particular, we obtain the complex analytic group isomorphism $\exp_\Lambda:\mathbb{C}/\Lambda\overset{\sim}{\to}E_\Lambda(\mathbb{C})$.
    
    One may naturally ask the converse problem. More precisely, let $E$ be a complex elliptic curve. How do we recover the corresponding period lattice $\Lambda_E$ and the complex analytic group isomorphism $\exp_{\Lambda_E}^{-1}:E(\mathbb{C})\to\mathbb{C}/\Lambda_E$? This question can be answered by the \emph{analytic continuation of an elliptic integral of the first kind}. In what follows, we follow \cite[Sec.VI.5-Sec.VI.8]{Kna92} closely (see also \cite[Sec.VI]{Sil09}) to give a brief exposition of the construction of $\exp_{\Lambda_E}^{-1}:E(\mathbb{C})\to\mathbb{C}/\Lambda_E$.
    
    After the linear change of variables, the defining equation of the elliptic curve $E$ can be written as the form
    \[
        Y^2=4(X-a)(X-b)(X-c)
    \]
    with pairwise distinct $a,b,c\in\mathbb{C}$ and $a+b+c=0$. Let $P=[x:y:1]\in E(\mathbb{C})$. Suppose that the elliptic curve $E$ is induced from a lattice $\Lambda_E$. Then we have $x=\wp_{\Lambda_E}(z)$ and
    \[
        \left(\frac{dx}{dz}\right)^2=4(x-a)(x-b)(x-c).
    \]
    In particular, we must have
    \begin{equation}\label{Eq:Elliptic_Integral}
        z=\int\frac{dx}{2\sqrt{(x-a)(x-b)(x-c)}}.
    \end{equation}
    Note that the right hand side of \eqref{Eq:Elliptic_Integral} is called an \emph{elliptic integral of the first kind}. 
    
    This observation suggests the potential candidate of $\exp_{\Lambda_E}^{-1}$. However, there are two issues about the right hand side of \eqref{Eq:Elliptic_Integral}. The first concern is that taking square roots depends on the choice of two branches. This issue can be solved by the analytic continuation of a fixed choice of the square root in a precise compactified double covering of $\mathbb{C}\setminus\{a,b,c\}$. The second concern is that the integral occurs in the right hand side of \eqref{Eq:Elliptic_Integral} is not path-independent. However, the values of this integral arising from different path are equal modulo some lattice $\Lambda_E$. In other words, it is well-defined in the quotient $\mathbb{C}/\Lambda_E$. In conclusion, we have the following theorem.
    \begin{theorem}[{cf. \cite[Prop.~5.2]{Sil09}}]
        Let $E$ be a complex elliptic curve. Then there is an explicitly constructed rank $2$ lattice $\Lambda_E\subset\mathbb{C}$ such that the map
        \begin{align*}
            F:E(\mathbb{C})&\to\mathbb{C}/\Lambda_E\\
            P&\mapsto\int_\mathcal{O}^P\omega~(\mbox{mod}~\Lambda_E)
        \end{align*}
        is a complex analytic group isomorphism, where $\mathcal{O}=[0:1:0]$ is the point at infinity, and $\omega=dx/y$ is a holomorphic differential form on $E(\mathbb{C})$. Moreover, we have $F=\exp_{\Lambda_E}^{-1}$.
    \end{theorem}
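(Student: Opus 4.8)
The plan is to construct $\Lambda_E$ as the \emph{period lattice} of the holomorphic differential $\omega = dx/y$, and then to verify, in turn, that $F$ is well defined, holomorphic, a group homomorphism, and finally an isomorphism equal to $\exp_{\Lambda_E}^{-1}$. First I would fix once and for all a branch of $y = 2\sqrt{(x-a)(x-b)(x-c)}$ by passing to the compactified double cover of $\mathbb{P}^1\setminus\{a,b,c,\infty\}$ on which this square root is single-valued; this cover is precisely $E(\mathbb{C})$, which is topologically a torus with $H_1(E(\mathbb{C}),\mathbb{Z})\cong\mathbb{Z}^2$. Integrating $\omega$ along a basis $\gamma_1,\gamma_2$ of $H_1(E(\mathbb{C}),\mathbb{Z})$ produces two complex periods, and I would define $\Lambda_E$ to be the subgroup of $\mathbb{C}$ they generate.

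Two paths from $\mathcal{O}$ to $P$ differ by a cycle, so $\int_{\mathcal{O}}^P\omega$ depends on the path only through an element of $\Lambda_E$; hence $F$ is well defined on $E(\mathbb{C})$ with values in $\mathbb{C}/\Lambda_E$, and since $\omega$ is holomorphic and nowhere vanishing, $F$ is holomorphic with nonvanishing derivative. The homomorphism property is the first substantive point: writing $\tau_Q$ for translation by $Q$ on $E$, the form $\omega=dx/y$ is the invariant differential, so $\tau_Q^*\omega=\omega$; transporting a path from $\mathcal{O}$ to $Q$ by $\tau_P$ then gives $\int_P^{P+Q}\omega=\int_{\mathcal{O}}^Q\omega$, whence $F(P+Q)=F(P)+F(Q)$ in $\mathbb{C}/\Lambda_E$.

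The main obstacle is twofold. The harder analytic input is to show that $\Lambda_E$ is genuinely a \emph{rank $2$} discrete lattice rather than a dense subgroup, i.e. that the two periods are $\mathbb{R}$-linearly independent; I would deduce this from the Riemann bilinear relation for $\omega$, or equivalently from the positivity of $\tfrac{i}{2}\int_{E(\mathbb{C})}\omega\wedge\overline{\omega}>0$, which forces the two periods off a common real line. The second obstacle is bijectivity. Since $F$ is a local biholomorphism, $E(\mathbb{C})$ is compact and connected, and $\mathbb{C}/\Lambda_E$ is connected, $F$ is a finite covering map; being a homomorphism, its degree equals $|\ker F|$, and it remains to pin this down to $1$.

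I expect to resolve the covering degree and the final identification simultaneously by inverting the integral. Near $\mathcal{O}$, where $F$ is a local biholomorphism with $F(\mathcal{O})=0$, I would let $g$ denote the local inverse and write $g(z)=(x(z),y(z))$; the double pole of $x$ at $\mathcal{O}$ yields the principal part $x(z)=z^{-2}+\cdots$, and $x$ satisfies $(x')^2=4(x-a)(x-b)(x-c)$ by the very definition of $z$. Extending $g$ by the group law makes $x(z)$ an even $\Lambda_E$-periodic meromorphic function with a double pole exactly on $\Lambda_E$ and the correct principal part, so by the characterization of the Weierstrass function $x(z)=\wp_{\Lambda_E}(z)$ and $y(z)=\wp_{\Lambda_E}'(z)$. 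Comparing differential equations then forces $60G_4(\Lambda_E)$ and $140G_6(\Lambda_E)$ to match the coefficients of $E$, so that $E=E_{\Lambda_E}$ and $g=\exp_{\Lambda_E}$. This exhibits a two-sided inverse of $F$, proving at once that $F$ is an isomorphism (so the covering degree is $1$) and that $F=\exp_{\Lambda_E}^{-1}$, completing the argument.
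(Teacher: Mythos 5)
Your route --- define $\Lambda_E$ as the lattice of periods of $\omega$ over a basis of $H_1(E(\mathbb{C}),\mathbb{Z})$, get well-definedness from path-independence modulo periods, the homomorphism property from translation invariance of $\omega$, discreteness from the Riemann bilinear relation, and the inversion via the Weierstrass function --- is the classical argument that the paper itself only sketches (it quotes the statement from Silverman and Knapp), and all of it is sound except the last step, which is the crux. The gap: you claim that extending the local inverse $g$ ``by the group law'' makes $x(z)$ a \emph{$\Lambda_E$-periodic} function ``with a double pole exactly on $\Lambda_E$''. That claim is not a consequence of the extension procedure; it is \emph{equivalent} to the statement $\ker F=\{\mathcal{O}\}$ that you are still trying to prove, so as written the argument is circular. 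Concretely, the extension is a holomorphic homomorphism $\tilde{g}:\mathbb{C}\to E(\mathbb{C})$ with $F\circ\tilde{g}=\pi$, where $\pi:\mathbb{C}\to\mathbb{C}/\Lambda_E$ is the projection. Its kernel $\Lambda':=\ker\tilde{g}$ is a priori only a subgroup of $\Lambda_E$ with $\Lambda_E/\Lambda'\hookrightarrow\ker F$, i.e.\ a sublattice of finite index equal to $\deg F$; the function $x(\tilde{g}(z))$ is $\Lambda'$-periodic with double poles exactly on $\Lambda'$, and nothing you have said excludes $\deg F>1$, in which case your characterization argument yields $x=\wp_{\Lambda'}\neq\wp_{\Lambda_E}$. (Indeed, $x(z+\lambda)=x(z)$ for all $z$ forces $\tilde{g}(\lambda)=\mathcal{O}$, so $\Lambda_E$-periodicity literally \emph{is} the assertion that $\ker F$ is trivial.)

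The gap is repairable, and the missing ingredient is the one fact your degree argument never uses: that $\Lambda_E$ is \emph{exactly} the image of the period homomorphism, not merely a lattice containing it. Two ways to finish. (i) Covering-space argument: for a covering map $F$ between tori, $\deg F=[\,H_1(\mathbb{C}/\Lambda_E,\mathbb{Z}):F_*H_1(E(\mathbb{C}),\mathbb{Z})\,]$, and under the identification $H_1(\mathbb{C}/\Lambda_E,\mathbb{Z})\cong\Lambda_E$ one has $F_*[\gamma]=\int_\gamma\omega$ (lift the loop $F\circ\gamma$ to $\mathbb{C}$ using that $F$ is locally the integral); since by construction $\int_{\gamma_1}\omega,\int_{\gamma_2}\omega$ generate $\Lambda_E$, this index is $1$, so $\deg F=1$ and your $\wp$-identification then goes through verbatim. (ii) Alternatively, run your final paragraph honestly with $\Lambda'$ in place of $\Lambda_E$: conclude $x=\wp_{\Lambda'}$, $y=\wp_{\Lambda'}'$, hence $E=E_{\Lambda'}$ and $\tilde{g}$ descends to an isomorphism $\mathbb{C}/\Lambda'\to E(\mathbb{C})$ with $\tilde{g}^*\omega=dz$; then the periods of $\omega$ over $H_1(E(\mathbb{C}),\mathbb{Z})$ equal the periods of $dz$ over $H_1(\mathbb{C}/\Lambda',\mathbb{Z})$, which are exactly $\Lambda'$, so $\Lambda_E=\Lambda'$, $\deg F=[\Lambda_E:\Lambda']=1$, and $F=\tilde{g}^{-1}=\exp_{\Lambda_E}^{-1}$. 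With either repair, the remaining steps of your proposal stand.
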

    
    The main purpose of this paper is to study an analogous construction of $\exp_{\Lambda_E}^{-1}$ for Drinfeld modules using the theory of Anderson dual $t$-motives together with the techniques of the analytic continuation method developed by Furusho in \cite{Fur20}.
    
\subsection{The main result}
    Let $\mathbb{F}_q$ be the finite field with $q$ elements, for $q$ a power of a prime number $p$. Let $\mathbb{P}^1_{/\mathbb{F}_q}$ be the projective line over $\mathbb{F}_q$ with a fixed point at infinity $\infty$. Let $k=\mathbb{F}_q(\theta)$ be the functoin field of $\mathbb{P}^1_{/\mathbb{F}_q}$ and $A=\mathbb{F}_q[\theta]$ be the ring of rational functions on $\mathbb{P}^1_{/\mathbb{F}_q}$ regular away from $\infty$. We denote by $|\cdot|_\infty$ the normalized $\infty$-adic norm on $k$ so that $|\theta|_\infty=q$. Let $k_\infty$ be the completion of $k$ with respect to $|\cdot|_\infty$ and let $\mathbb{C}_\infty$ be the $\infty$-adic completion of a fixed algebraic closure of $k_\infty$. Let $\ok$ be the algebraic closure of $k$ in $\mathbb{C}_\infty$.
    
    Let $\Lambda\subset\mathbb{C}_\infty$ be a free $A$-module of rank $r\in\mathbb{Z}_{>0}$ so that for any $\nu\in\mathbb{R}_{>0}$ the intersection of $\Lambda$ and $\{z\in\mathbb{C}_\infty\mid|z|_\infty<\nu\}$ is finite. We can associate the following $\mathbb{F}_q$-linear power series for $\Lambda$:
    \begin{equation}\label{Eq:Expoential_of_Lattice}
        \exp_\Lambda(X):=X\prod_{0\neq\lambda\in\Lambda}\left(1-\frac{X}{\lambda}\right)\in\mathbb{C}_\infty\llbracket X\rrbracket.
    \end{equation}
    Note that $\exp_\Lambda(X)$ induces a surjective, entire, $\mathbb{F}_q$-linear function on $\mathbb{C}_\infty$, from which we have an analogue of \eqref{Eq:Elliptic_Curve} as $\mathbb{F}_q[t]$-modules
    \begin{equation}\label{Eq:Drinfeld_Modules_Uniformization}
        0\to \Lambda\hookrightarrow\mathbb{C}_\infty\overset{\exp_\Lambda(\cdot)}{\twoheadrightarrow}\mathbb{C}_\infty\to 0,
    \end{equation}
    where the $\mathbb{F}_q[t]$-module structure is explained as follows.
    
    Let $t$ be a variable which is independent from $\theta$. There is an $\mathbb{F}_q$-algebra homomorphism $\phi^\Lambda:\mathbb{F}_q[t]\to\End_{\mathbb{F}_q}(\mathbb{G}_a)$ so that for any $z\in\mathbb{C}_\infty$ and $a\in\mathbb{F}_q[t]$
    \[
        a(\theta)\exp_\Lambda(z)=\exp_\Lambda(\phi^\Lambda_a(z)).
    \]
    Here $\phi^\Lambda_a$ refers to the image of $a$ under $\phi^\Lambda$, and $\End_{\mathbb{F}_q}(\mathbb{G}_a)$ is the ring of $\mathbb{F}_q$-linear endomorphisms on the additive group $\mathbb{G}_a$. Note that $\phi^\Lambda$ gives a non-scalar $\mathbb{F}_q[t]$-action on $\mathbb{C}_\infty$. Thus, \eqref{Eq:Drinfeld_Modules_Uniformization} is a short exact sequence of $\mathbb{F}_q[t]$-modules, where the middle $\mathbb{C}_\infty$ is the $\mathbb{F}_q[t]$-module with the scalar action replacing $t$ by $\theta$. The pair $E_\Lambda:=(\mathbb{G}_a,\phi^\Lambda)$ is called a \emph{Drinfeld module} of rank $r$, which was introduced by Drinfeld in his seminal work \cite{Dri74}. The case $r=1$ was initiated by Carlitz in \cite{Car35} and it serves as an analogue of the multiplicative group $\mathbb{G}_m$ in the global function fields setting.
    
    We denote by $E_\Lambda(\mathbb{C}_\infty)$ the $\mathbb{F}_q[t]$-module whose underlying space is the $\mathbb{C}_\infty$-valued points of the additive group $\mathbb{G}_{a}$, and the $\mathbb{F}_q[t]$-module structure arises from $\phi^\Lambda$. Then $\eqref{Eq:Drinfeld_Modules_Uniformization}$ induces an analytic isomorphism of $\mathbb{F}_q[t]$-modules $\exp_\Lambda:\mathbb{C}_\infty/\Lambda\overset{\sim}{\to}E_\Lambda(\mathbb{C}_\infty)$. Inspired by the situation of complex elliptic curves, let $E=(\mathbb{G}_a,\phi)$ be a Drinfeld module of rank $r$. The first question is how to recover the corresponding period lattice $\Lambda_E$. If we set $\exp_E(\cdot):=\exp_{\Lambda_E}(\cdot)$, then the further question is how to recover the analytic isomorphism of $\mathbb{F}_q[t]$-modules $\exp_E^{-1}:E(\mathbb{C}_\infty)\to\mathbb{C}_\infty/\Lambda_E$. It is known by Anderson that specializations of rigid analytic trivializations associated to dual $t$-motives of Drinfeld modules can be used to recover their periods (see \eqref{Eq:E_0_Periods}). In the present paper, we adopt the rigid analytic trivialization coming from dual $t$-motives. Note that there is a dual notion of the rigid analytic trivialization arising from the associated $t$-motives of Drinfeld modules (see \cite{And86}), which also supplies the periods of Drinfeld modules (see \cite[Sec.~2.4]{Gos94}, \cite[Sec.~4]{Pel08}), and a direct way of its construction without using periods as initial inputs is recently established by Khaochim and Papanikolas in \cite{KP21}.
    
    Let $\log_{E}(X)\in\mathbb{C}_\infty\llbracket X\rrbracket$ be the formal inverse of the power series $\exp_{E}(X)$. We refer $\log_{E}$ as the \emph{logarithmic function} of the Drinfeld module $E$. As a formal power series, it satisfies
    \[
        a(\theta)\log_{E}(X)=\log_{E}(\phi_a(X))
    \]
    for any $a\in\mathbb{F}_q[t]$. In general, $\log_{E}(\cdot)$ is only defined on a small region in $\mathbb{C}_\infty$. Note that the radius of convergence $R_E$ of $\log_{E}(\cdot)$ is determined explicitly in \cite[Cor.~4.5]{KP21}. It is natural to ask whether one can construct an extension of $\log_{E}(\cdot)$ so that it serves as the analytic isomorphism of $\mathbb{F}_q[t]$-modules $\exp_{E}^{-1}:E(\mathbb{C}_\infty)\to\mathbb{C}_\infty/\Lambda_E$. The present paper is to give an affirmative answer to this question, which can be summarized as the following theorem (see Theorem~\ref{Thm:Main_Thm_1}, Theorem~\ref{Thm:Main_Thm_2}, and Corollary~\ref{Thm:Main_Thm_3}).
    
    \begin{theorem}\label{Thm:Main_Thm_Intro}
        Let $E$ be a Drinfeld module defined over $\mathbb{C}_\infty$ with period lattice $\Lambda_E\subset\mathbb{C}_\infty$. Then there is an explicitly constructed function $\overset{\rightarrow}{\log_{E}}:E(\mathbb{C}_\infty)\to\mathbb{C}_\infty/\Lambda_E$ allowing analytic lifts locally such that the following assertions hold.
        \begin{enumerate}
            \item If $\xi\in E(\mathbb{C}_\infty)$ with $|\xi|_\infty<R_E$, then
            \[
                \overset{\rightarrow}{\log_E}(\xi)\equiv\log_E(\xi)~(\mbox{mod}~\Lambda_E).
            \]
            \item 
            The function $\overset{\rightarrow}{\log_{E}}$ is an isomorphism of $\mathbb{F}_q[t]$-modules. More precisely, for any $\xi\in\mathbb{C}_\infty$, we have
            \[
                \overset{\rightarrow}{\log_E}(\phi_t(\xi))\equiv\theta\overset{\rightarrow}{\log_E}(\xi) ~(\mathrm{mod}~\Lambda_E).
            \]
            \item The function $\overset{\rightarrow}{\log_{E}}$ is the inverse of $\exp_{\Lambda_E}:\mathbb{C}_\infty/\Lambda_E\to E(\mathbb{C}_\infty)$.
        \end{enumerate}
    \end{theorem}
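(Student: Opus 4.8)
The plan is to build $\overset{\rightarrow}{\log_E}$ by iterating preimages under $\phi_t$ until one lands in the region where $\log_E$ converges, following the strategy of \cite{Fur20}. Write $\phi_t(X)=\theta X+a_1X^q+\cdots+a_rX^{q^r}$ with $a_r\neq 0$. Since this is a nonconstant $\mathbb{F}_q$-linear polynomial it is surjective on $\mathbb{C}_\infty$, and for $|\xi|_\infty$ large every solution $\eta$ of $\phi_t(\eta)=\xi$ has, by the Newton polygon, the single size $|\eta|_\infty=(|\xi|_\infty/|a_r|_\infty)^{1/q^r}$, which is strictly smaller than $|\xi|_\infty$. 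Hence after finitely many divisions one reaches an $\eta$ with $|\eta|_\infty<R_E$, and I would set $\overset{\rightarrow}{\log_E}(\xi):=\theta^{\,n}\log_E(\eta)$ whenever $\phi_t^{\,n}(\eta)=\xi$ and $|\eta|_\infty<R_E$. Each inverse branch of $\phi_t$ is a local analytic isomorphism away from its ramification locus, so composing with the convergent series $\log_E$ exhibits the local analytic lifts asserted in the statement.

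The heart of the matter, and the step I expect to be the main obstacle, is to show that $\overset{\rightarrow}{\log_E}(\xi)$ is independent of all the choices, modulo $\Lambda_E$. At each division the preimage is determined only up to the $t$-torsion $\ker\phi_t$, and the compatibility $\exp_{\Lambda_E}(\theta z)=\phi_t(\exp_{\Lambda_E}(z))$ identifies $\ker\phi_t=\exp_{\Lambda_E}(\theta^{-1}\Lambda_E)$; this is precisely the Drinfeld-module counterpart of the square-root-branch and path-dependence ambiguities arising for the elliptic integral \eqref{Eq:Elliptic_Integral}. Using the $\mathbb{F}_q$-linearity (hence additivity) of $\log_E$, replacing $\eta$ by a second small preimage $\eta'$ with $\phi_t^{\,n}(\eta-\eta')=0$ changes $\theta^{\,n}\log_E(\eta)$ by $\theta^{\,n}\mu$ for some $\mu\in\theta^{-n}\Lambda_E$, and $\theta^{\,n}\mu\in\Lambda_E$ because $\theta^{\,n}\Lambda_E\subseteq\Lambda_E$. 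Making this rigorous for torsion points lying outside the disk of convergence is exactly where the rigid analytic trivialization $\Psi$ of the Anderson dual $t$-motive attached to $E$ is needed: its specialization at $t=\theta$ recovers the periods generating $\Lambda_E$, and tracking a change of branch through the functional equation of $\Psi$ shows the accumulated ambiguity lies in $\Lambda_E$. Thus $\overset{\rightarrow}{\log_E}$ descends to a well-defined map $E(\mathbb{C}_\infty)\to\mathbb{C}_\infty/\Lambda_E$.

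Granting well-definedness, the three assertions should follow quickly. For (1), if $|\xi|_\infty<R_E$ then no division is required ($n=0$), so $\overset{\rightarrow}{\log_E}(\xi)\equiv\log_E(\xi)\pmod{\Lambda_E}$ by construction. For (2), choosing $\eta$ with $\phi_t^{\,n}(\eta)=\xi$ gives $\phi_t^{\,n+1}(\eta)=\phi_t(\xi)$, whence $\overset{\rightarrow}{\log_E}(\phi_t(\xi))\equiv\theta^{\,n+1}\log_E(\eta)=\theta\cdot\theta^{\,n}\log_E(\eta)\equiv\theta\,\overset{\rightarrow}{\log_E}(\xi)\pmod{\Lambda_E}$, the well-definedness absorbing any change in the number of divisions; together with the manifest $\mathbb{F}_q$-linearity this yields the claimed isomorphism of $\mathbb{F}_q[t]$-modules. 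For (3), I would check $\exp_{\Lambda_E}\circ\overset{\rightarrow}{\log_E}=\mathrm{id}$ and $\overset{\rightarrow}{\log_E}\circ\exp_{\Lambda_E}\equiv\mathrm{id}\pmod{\Lambda_E}$ by reducing to the disk $|\cdot|_\infty<R_E$, where $\exp_E$ and $\log_E$ are mutually inverse power series, and then propagating the identities through $\exp_{\Lambda_E}(\theta z)=\phi_t(\exp_{\Lambda_E}(z))$ and assertion (2); the exactness of \eqref{Eq:Drinfeld_Modules_Uniformization} then promotes the resulting bijection to an isomorphism of $\mathbb{F}_q[t]$-modules, identifying $\overset{\rightarrow}{\log_E}$ with $\exp_{\Lambda_E}^{-1}$.
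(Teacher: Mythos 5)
You take a genuinely different route from the paper, and the skeleton of it (define $\overset{\rightarrow}{\log_E}(\xi):=\theta^n\log_E(\eta)$ for any $\eta$ with $\phi_t^{\,n}(\eta)=\xi$ and $|\eta|_\infty<R_E$) can be made to work; but two of your steps, as written, are genuine gaps. The first is termination of the division process. The Newton-polygon formula $|\eta|_\infty=(|\xi|_\infty/|a_r|_\infty)^{1/q^r}$ is valid only when $|\xi|_\infty$ is large enough that the polygon of $\phi_t(X)-\xi$ is a single segment, and even granting that formula at every step, the recursion $x\mapsto (x/|a_r|_\infty)^{1/q^r}$ has fixed point $x^*=|a_r|_\infty^{-1/(q^r-1)}$, and a decreasing orbit starting above $x^*$ never crosses it. So your argument drives $|\xi|_\infty$ down toward $x^*$ but does not show the orbit ever enters the disk $|\cdot|_\infty<R_E$: for that you would need both $x^*<R_E$ and single-segment polygons throughout the intermediate range, and neither is addressed (for the Carlitz module both happen to hold, but nothing is proved in general). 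The clean repair is the device the paper itself uses in the proof of Corollary~\ref{Thm:Main_Thm_3}: do the division on the Lie-algebra side, i.e. write $\xi=\exp_E(z)$ and take $\eta:=\exp_E(z/\theta^n)$, which tends to $0$ as $n\to\infty$, so smallness is automatic.

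The second gap is well-definedness, which you rightly call the heart of the matter but then defer to ``tracking a change of branch through the functional equation of $\Psi$''; that is not an argument, and in fact no appeal to the rigid analytic trivialization is needed or useful in your setup. The honest proof is short: if $\phi_t^{\,n}(\eta)=\phi_t^{\,m}(\eta')=\xi$ with $|\eta|_\infty,|\eta'|_\infty<R_E$, set $u=\log_E(\eta)$ and $u'=\log_E(\eta')$; since $\log_E$ inverts $\exp_E$ on that disk, $\exp_E(\theta^nu)=\phi_t^{\,n}(\eta)=\xi=\exp_E(\theta^mu')$, hence $\theta^nu-\theta^mu'\in\Ker(\exp_E)=\Lambda_E$. (In particular, torsion points outside the convergence disk never intervene: a difference of two small preimages is small by the ultrametric inequality.) With these two repairs your map is well defined and inverse to $\exp_{\Lambda_E}$, so by uniqueness of inverses it agrees with the paper's function, and assertions (1)--(3) follow essentially as you say. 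What your construction does not deliver is the content behind the phrase ``explicitly constructed'' in the statement: the paper's $\overset{\rightarrow}{\log_E}$ is the specialization at $t=\theta$ of a deformation $\overset{\rightarrow}{\mathscr{L}_E}$ built in the Tate algebra by solving the Artin--Schreier-type equation $\wp_r(\cdot)=Z\mathbf{e}_1\Psi_E$ (Furusho's substitute for path integrals), with the functional equation of Theorem~\ref{Thm:Main_Thm_2} deduced from the Frobenius difference equation $\Psi_E^{(-1)}=\Phi_E\Psi_E$ and the period identification \eqref{Eq:E_0_Periods}; that explicit Tate-algebra lift, with its branches and local analytic structure, is what the paper actually constructs, whereas your approach produces the same map only as an abstract inverse.
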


\subsection{Strategy and organization}
    The key ingredients of our strategy can be divided into two steps. In the first step, we establish an explicit formula for the deformation series of Drinfeld logarithms using the Anderson dual $t$-motives associated to the given Drinfeld modules. After an appropriate specialization, we deduce a formula for the coefficients of Drinfeld logarithms. In \cite{EGP13,EGP14}, El-Guindy and Papanikolas constructed the deformation series of Drinfeld logarithms using combinatorial tools such as shadow partitions. Our results are inspired by the calculations in \cite{KP21} and provide a different perspective of the construction of El-Guindy and Papanikolas. In fact, our formula is related to a more general result of Anderson's exponentiation theorem for abelian uniformizable $t$-modules (see Remark~\ref{Rem:Comparision_Anderson}), and it can be also seen as an explicit example of the abstract results established in \cite[Section~2]{ANDTR20} and \cite{Gre22} for the logarithms of abelian $t$-modules (see Remark~\ref{Rem:Comparision_ANDTR}). However, we emphasize that such explicit calculations are essential for our purpose to fit into the framework of Furusho's analytic continuation.
    
    In the second step, we follow the ideas of Furusho in \cite{Fur20} to extend the region of the convergence of the Drinfeld logarithms in question. The crucial point in Furusho's analytic continuation is to use the Artin-Schreier equations, which plays a substitute role of the iterated path integrals. One of the main difficulty in this step is to show that our extended Drinfeld logarithm is an isomorphism of $\mathbb{F}_q[t]$-modules. We overcome this issue by using a specific choice of a rigid analytic trivialization of the Drinfeld module in question as well as the explicit formula we obtained in the first step. Note that such precise information is crucial for our approach and is one of the obstacle to extending the analytic continuation to higher dimensional $t$-modules. Further investigations would be necessary to this direction.
    
    The organization of the present paper is given as follows. In Section~2, we set up our notation, and then we follow closely the exposition of \cite{NP21} to review the theory of Anderson dual $t$-motives associated to Drinfeld modules. For the related contents, readers can also consult \cite{BP20,HJ20}. In Section~3, we first use a specific $t$-frame of the Anderson dual $t$-motive to construct the deformation series of Drinfeld logarithms. Note that the resulting object coincides with the one constructed by El-Guindy and Papanikolas, while we adopt a different approach. Finally, we apply the ideas of Furusho in \cite{Fur20} to construct the analytic continuation of Drinfeld logarithms and establish the essential properties.

\section{Preliminaries}
\subsection{Notation}
    \begin{itemize}
		\setlength{\leftskip}{0.8cm}
        \setlength{\baselineskip}{18pt}
		\item[$\mathbb{F}_q$] :=  A fixed finite field with $q$ elements, for $q$ a power of a prime number $p$.
		\item[$\infty$] :=  A fixed closed point on the projective line $\mathbb{P}^1(\mathbb{F}_q)$.
		\item[$A$] :=  $\mathbb{F}_q[\theta]$, the regular functions of $\mathbb{P}^1$ away from $\infty$.
		\item[$k$] :=  $\mathbb{F}_q(\theta)$, the function field of $\mathbb{P}^1$.
		\item[$k_\infty$] :=  The completion of $k$ at the place $\infty$.
		\item[$\mathbb{C}_\infty$] :=  The completion of a fixed algebraic closure of $k_\infty$.
        \item[$\overline{k}$] := The algebraic closure of $k$ inside $\mathbb{C}_\infty$.
    \end{itemize}
    
    Recall that $t$ is a variable independent from $\theta$. We set $\mathbb{T}$ to be the Tate algebra of rigid analytic functions converging on the closed unit disc in $\mathbb{C}_\infty$. More precisely, it is given by
    \[
        \mathbb{T}:=\{\sum_{i\geq 0}c_it^i\in\mathbb{C}_\infty\llbracket t\rrbracket\mid\lim_{i\to 0}|c_i|_\infty=0\}.
    \]
    For $\alpha\in\mathbb{C}_\infty^\times$, we further define
    \[
        \mathbb{T}_\alpha:=\{\sum_{i\geq 0}c_it^i\in\mathbb{C}_\infty\llbracket t\rrbracket\mid\lim_{i\to 0}|c_i|_\infty|\alpha|^i_\infty=0\}.
    \]
    Let $\mathbb{E}\subset\mathbb{T}$ be the ring of power series inducing an entire function on $\mathbb{C}_\infty$, and whose coefficients lie in a finite extension over $k_\infty$. Note that for $\alpha\in\mathbb{C}_\infty^\times$ we define a norm $||\cdot||_\alpha$ on $\mathbb{T}_\alpha$ given by
    \[
        ||f||_\alpha:=\sup_{i\geq 0}\{|c_i|_\infty|\alpha|_\infty^i\}<\infty
    \]
    whenever $f=\sum_{i\geq 0}c_it^i\in\mathbb{T}_\alpha$. Furthermore, the norm $||\cdot||_\alpha$ is extended to $\Mat_{m\times n}(\TT_\alpha)$ given by
    \[
        ||F||_\alpha:=\sup_{i,j}\{||f_{ij}||_\alpha\}<\infty
    \]
    whenever $F=(f_{ij})\in\Mat_{m\times n}(\TT_\alpha)$. By abuse of the notation, the norm on $\Mat_{m\times n}(\TT)$ will be denoted by $||\cdot||:=||\cdot||_1$.
	
\subsection{Drinfeld modules and Anderson dual $t$-motives}
    Let $R$ be an $A$-algebra containing $k$ and $\tau:=(x\mapsto x^q):R\to R$ be the Frobenius $q$-th power operator. We define $R[\tau]$ to be the twisted polynomial ring in $\tau$ over $R$ subject to the relation $\tau\alpha=\alpha^q\tau$ for $\alpha\in R$. A Drinfeld $\mathbb{F}_q[t]$-module over $R$ is a pair $E=(\mathbb{G}_{a/R},\phi)$ where $\mathbb{G}_{a/R}$ is the additive group scheme defined over $R$ and $\phi$ is an $\mathbb{F}_q$-algebra homomorphism $\phi:\mathbb{F}_q[t]\to R[\tau]$ so that $\im(\phi)\not\subset R$, and  $\phi_t=\theta+\kappa_{1}\tau+\cdots+\kappa_{r}\tau^r\in R[\tau]$ with $\kappa_r\neq 0$. We define $r=\deg_\tau\phi_t$ to be the rank of $E$, and we define $\partial\phi_a:=a(\theta)$ for any $a\in\mathbb{F}_q[t]$.
    
    Let $E=(\mathbb{G}_{a/\mathbb{C}_\infty},\phi)$ be a Drinfeld $\mathbb{F}_q[t]$-module of rank $r$ defined over $\mathbb{C}_\infty$. Then there exists a unique $\mathbb{F}_q$-linear power series $\exp_E(X)\in\mathbb{C}_\infty\llbracket X\rrbracket$ so that
    \[
        \exp_E(X)\equiv X~(\mbox{mod}~X^q\mathbb{C}_\infty\llbracket X\rrbracket)
    \]
    and satisfying $\exp_E(a(\theta))=\phi_a(\exp_E(X))$ for any $a\in\mathbb{F}_q[t]$. Furthermore, $\exp_E(X)$ induces an entire, surjective, and $\mathbb{F}_q$-linear function on $\mathbb{C}_\infty$. One can associate an $A$-lattice $\Lambda_E:=\Ker(\exp_E(\cdot))\subset\mathbb{C}_\infty$ of rank $r$ for $E$, i.e. a free $A$-module of rank $r$ in $\mathbb{C}_\infty$ so that for any $\nu\in\mathbb{R}_{>0}$ the intersection of $\Lambda_E$ and $\{z\in\mathbb{C}_\infty\mid|z|_\infty<\nu\}$ is finite. Each non-zero element in $\Lambda_E$ is called a \emph{period} of $E$.
    
    In what follows, we recall the notion of dual $t$-motives by following \cite[Def.~4.4.1]{ABP04}. For $n\in\mathbb{Z}$, we define the $n$-fold Frobenius twisting
    \begin{align*}
	    \mathbb{C}_{\infty}((t))&\rightarrow\mathbb{C}_{\infty}((t))\\
	    f:=\sum_{i}a_it^i&\mapsto \sum_{i}a_i^{q^{n}}t^i=:f^{(n)}.
    \end{align*}
    It can be naturally extended to $\Mat_{m\times \ell}(\mathbb{C}_\infty((t)))$ entrywise by setting $B^{(n)}:=(B_{ij}^{(n)})$ whenever $B=(B_{ij})\in\Mat_{m\times\ell}(\mathbb{C}_\infty((t)))$. We denote by $\mathbb{C}_\infty[t,\sigma]$ the non-commutative $\mathbb{C}_\infty[t]$-algebra generated by $\sigma$ subject to the following relation:
    \[
        \sigma f=f^{(-1)}\sigma, \quad f\in\mathbb{C}_\infty[t].
    \]
    Note that $\mathbb{C}_\infty[t,\sigma]$ contains $\mathbb{C}_\infty[t]$ and $\mathbb{C}_\infty[\sigma]$. Moreover, its center is $\mathbb{F}_q[t]$. 

    \begin{definition}\label{Def:DtM}
        A dual $t$-motive is a left  $\mathbb{C}_\infty[t,\sigma]$-module $\mathcal{M}$ satisfying the following conditions:
        \begin{itemize}
            \item[(i)] $\mathcal{M}$ is a free left $\mathbb{C}_\infty[t]$-module of finite rank.
            \item[(ii)] $\mathcal{M}$ is a free left $\mathbb{C}_\infty[\sigma]$-module of finite rank.
            \item[(iii)] $(t-\theta)^n \mathcal{M}\subset \sigma \mathcal{M}$ for any sufficiently large integer $n$.
        \end{itemize}
    \end{definition}
    
    Let $E=(\mathbb{G}_{a/\CC_\infty},\phi)$ be a Drinfeld $\mathbb{F}_q[t]$-module of rank $r$ defined over $\CC_\infty$ with
    \[
        \phi_t=\theta+\kappa_1\tau+\cdots+\kappa_r\tau^r\in\CC_\infty[\tau].
    \]
    We associate a left $\CC_\infty[t,\sigma]$-module $\mathcal{M}_E$ for $E$ as follows: Let $\mathcal{M}_E:=\CC_\infty[\sigma]$. It naturally has a left $\CC_\infty[\sigma]$-module structure. The left $\CC_\infty[t]$-module structure of $\mathcal{M}_E$ is uniquely determined by the $t$-action, and it is given by
    \[
        tm:=m\phi_t^\star:=m\left(\theta+\kappa_1^{(-1)}\sigma+\cdots+\kappa_r^{(-r)}\sigma^r\right).
    \]
    It is clear that $\mathcal{M}_E$ is free of rank $1$ over $\CC_\infty[\sigma]$ and it is a straightforward computation that
    \[
        (t-\theta)\mathcal{M}_E\subset\sigma\mathcal{M}_E.
    \]
    Note that $\mathcal{M}_E$ is free of rank $r$ over $\mathbb{C}_\infty[t]$, and $\{1,\sigma,\dots,\sigma^{r-1}\}$ forms a $\CC_\infty[t]$-basis of $\mathcal{M}_E$. Hence $\mathcal{M}_E$ defines a dual $t$-motive. For the convenience of later use, we refer $\mathcal{M}_E$ as \emph{the associated dual $t$-motive of $E$}. 
    
    Now we are going to explain how to recover the Drinfeld module $E$ from its associated dual $t$-motive $\mathcal{M}_E$. Let $m=\sum_{i=0}^{n}\alpha_i\sigma^i\in\mathcal{M}_E$ with $\alpha_i\in\CC_\infty$. We define
    \[
        \epsilon_0(m):=\alpha_0\in\CC_\infty,~
        \epsilon_1(m):=\sum_{i=0}^n\alpha_i^{(i)}\in\CC_\infty
    \]
    Note that $\epsilon_0:\mathcal{M}_E\to\mathbb{C}_\infty$ is a $\mathbb{C}_\infty$-linear map and $\epsilon_1:\mathcal{M}_E\to\mathbb{C}_\infty$ is an $\mathbb{F}_q$-linear map. Moreover, due to Anderson (see also {\cite[Prop.~2.5.8]{HJ20}} or {\cite[Lem.~3.1.2]{NP21}}), $\epsilon_0$ and $\epsilon_1$ induce isomorphisms:
    \[
        \epsilon_0:\mathcal{M}_E/\sigma\mathcal{M}_E\cong \Lie(E)(\mathbb{C}_\infty),~
        \epsilon_1:\mathcal{M}_E/(\sigma-1)\mathcal{M}_E\cong
        E(\mathbb{C}_\infty)
    \]
    where $\epsilon_0$ is $\mathbb{C}_\infty[t]$-linear and $\epsilon_1$ is $\mathbb{F}_q[t]$-linear.

    Now we recall the notion of \emph{$t$-frame} based on the ideas of Anderson. Let $\{m_1,\dots,m_r\}$ be a $\mathbb{C}_\infty[t]$-basis of $\mathcal{M}_E$. Then there is an unique matrix $\Phi_E\in\Mat_r(\mathbb{C}_\infty[t])$ such that
    \[
        \sigma(m_1,\dots,m_r)^\tr=\Phi_E(m_1,\dots,m_r)^\tr.
    \]
    We define the following $\mathbb{C}_\infty[t]$-linear map which is an isomorphism:
    \begin{align*}
        \iota:\Mat_{1\times r}(\mathbb{C}_\infty[t])&\to\mathcal{M}_E\\
        (a_1,\dots,a_r)&\mapsto a_1m_1+\cdots+a_rm_r.
    \end{align*}
    The pair $(\iota,\Phi_E)$ is called a $t$-frame of $E$. Note that for any $(a_1,\dots,a_r)\in\Mat_{1\times r}(\mathbb{C}_\infty[t])$, we have
    \begin{align*}
        \sigma\iota(a_1,\dots,a_r)&=\sigma(a_1,\dots,a_r)(m_1,\dots,m_r)^\tr\\
        &=(a_1^{(-1)},\dots,a_r^{(-1)})\sigma(m_1,\dots,m_r)^\tr\\
        &=(a_1^{(-1)},\dots,a_r^{(-1)})\Phi_E(m_1,\dots,m_r)^\tr\\
        &=\iota((a_1^{(-1)},\dots,a_r^{(-1)})\Phi_E).
    \end{align*}
    
    Recall from our notation that
    \[
        \mathbb{T}_\theta=\{f=\sum_{i\geq 0}a_it^i\in\mathbb{C}_\infty\llbracket t\rrbracket\mid\lim_{i\to 0}q^i|a_i|_\infty=0\}.
    \]
    For $f=\sum_{i\geq 0}a_it^i\in\mathbb{T}_\theta$, we have defined the Gauss norm
    \[
        ||f||_\theta:=\sup_{i\geq 0}\{q^i|a_i|_\infty\}<\infty.
    \]
    Then we have the following result due to Anderson.
    \begin{lemma}[Anderson;~see also {\cite[Prop.~2.5.8]{HJ20}} or {\cite[Lem.~3.4.1]{NP21}}]
        There exists a unique bounded $\mathbb{C}_\infty$-linear map
        \[
            \mathscr{E}_0:\left(\Mat_{1\times r}(\mathbb{T}_\theta),||\cdot||_\theta\right)\to\left(\mathbb{C}_\infty,|\cdot|_\infty\right)
        \]
        of normed vector spaces such that $\mathscr{E}_0\mid_{\Mat_{1\times r}(\mathbb{C}_\infty[t])}=\epsilon_0\circ\iota$.
    \end{lemma}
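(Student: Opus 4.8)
The plan is to make the composite $\epsilon_0\circ\iota$ completely explicit, recognize it as evaluation at $t=\theta$, and then extend by continuity. First I would record the single computation that drives everything: for $m=\sum_i\alpha_i\sigma^i\in\mathcal{M}_E$ one has $t\cdot m=m\phi_t^\star=\big(\sum_i\alpha_i\sigma^i\big)\big(\theta+\kappa_1^{(-1)}\sigma+\cdots+\kappa_r^{(-r)}\sigma^r\big)$, and since $\sigma^i\theta=\theta^{(-i)}\sigma^i$, the only contribution to the $\sigma^0$-coefficient comes from $\alpha_0\sigma^0\cdot\theta=\alpha_0\theta$ (every term involving $\kappa_l^{(-l)}\sigma^l$ produces a power $\sigma^{i+l}$ with $i+l\ge 1$). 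Hence $\epsilon_0(t\cdot m)=\theta\,\epsilon_0(m)$, and by induction $\epsilon_0(t^k\cdot m)=\theta^k\epsilon_0(m)$ for all $k\ge 0$. Writing $\mathbf{a}=(a_1,\dots,a_r)$ with $a_j=\sum_k c_{j,k}t^k\in\mathbb{C}_\infty[t]$ and using the $\mathbb{C}_\infty$-linearity of $\epsilon_0$, this yields the closed form
\[
    \epsilon_0\circ\iota(\mathbf{a})=\sum_{j=1}^r\Big(\sum_k c_{j,k}\theta^k\Big)\epsilon_0(m_j)=\sum_{j=1}^r a_j(\theta)\,\epsilon_0(m_j),
\]
where $a_j(\theta)$ denotes evaluation of the polynomial $a_j$ at $t=\theta$.

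This formula dictates the extension. The key observation is that $\mathbb{T}_\theta$ is exactly the space on which evaluation at $t=\theta$ converges: if $f=\sum_i c_it^i\in\mathbb{T}_\theta$ then $|c_i\theta^i|_\infty=q^i|c_i|_\infty\to 0$, so $f(\theta):=\sum_i c_i\theta^i$ converges in the complete field $\mathbb{C}_\infty$ and satisfies $|f(\theta)|_\infty\le\sup_i q^i|c_i|_\infty=||f||_\theta$. I would therefore define $\mathscr{E}_0(\mathbf{a}):=\sum_{j=1}^r a_j(\theta)\,\epsilon_0(m_j)$ for $\mathbf{a}\in\Mat_{1\times r}(\mathbb{T}_\theta)$. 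This is visibly $\mathbb{C}_\infty$-linear, it agrees with $\epsilon_0\circ\iota$ on $\Mat_{1\times r}(\mathbb{C}_\infty[t])$ by the computation above, and, setting $C:=\max_j|\epsilon_0(m_j)|_\infty$, one gets $|\mathscr{E}_0(\mathbf{a})|_\infty\le\max_j|a_j(\theta)|_\infty\cdot C\le C\,||\mathbf{a}||_\theta$, so $\mathscr{E}_0$ is bounded.

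For uniqueness I would use that $\Mat_{1\times r}(\mathbb{C}_\infty[t])$ is dense in $\Mat_{1\times r}(\mathbb{T}_\theta)$ with respect to $||\cdot||_\theta$: the partial sums of any $f\in\mathbb{T}_\theta$ converge to $f$ since $||f-\sum_{i\le N}c_it^i||_\theta=\sup_{i>N}q^i|c_i|_\infty\to 0$. Any two bounded, hence continuous, $\mathbb{C}_\infty$-linear maps extending $\epsilon_0\circ\iota$ must then agree on this dense subspace and therefore everywhere; equivalently, existence and uniqueness together are simply the bounded-linear-extension theorem applied to the bounded functional $\epsilon_0\circ\iota$ on the dense domain $\Mat_{1\times r}(\mathbb{C}_\infty[t])$ with values in the Banach space $\mathbb{C}_\infty$. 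The only genuine content is the twisting computation of the first paragraph, which identifies $\epsilon_0\circ\iota$ with evaluation at $\theta$; once that is in hand, boundedness (because $||\cdot||_\theta$ is calibrated to the radius $|\theta|_\infty=q$) and density are routine, so I expect no serious obstacle beyond keeping the Frobenius twists $\theta^{(-i)}$ bookkept correctly.
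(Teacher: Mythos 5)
Your proof is correct. Note that the paper does not actually prove this lemma itself---it is quoted from Anderson via \cite[Prop.~2.5.8]{HJ20} and \cite[Lem.~3.4.1]{NP21}---and your route is exactly the standard one: the twisting computation identifying $\epsilon_0\circ\iota$ with $\mathbf{a}\mapsto\sum_{j}a_j(\theta)\,\epsilon_0(m_j)$ (which is just the $\mathbb{C}_\infty[t]$-linearity of $\epsilon_0$ modulo $\sigma\mathcal{M}_E$, already recorded in the paper), followed by bounded extension from the dense polynomial subspace. As a consistency check, for the basis $\{1,\sigma,\dots,\sigma^{r-1}\}$ used throughout the paper one has $\epsilon_0(m_1)=1$ and $\epsilon_0(m_j)=0$ for $j\geq 2$, so your closed form specializes to $\mathscr{E}_0(\mathbf{a})=\alpha_1(\theta)$, which is precisely the formula the paper later invokes from \cite[Ex.~3.5.14]{NP21} in Section~3.
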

    
    Since $\{1,\sigma,\dots,\sigma^{r-1}\}$ forms a $\mathbb{C}_\infty[t]$-basis of $\mathcal{M}_E$, we have the associated $t$-frame $(\iota,\Phi_E)$, where $\Phi_E\in\Mat_{r}(\mathbb{C}_\infty[t])$ is given by
    \begin{equation}\label{Eq:T_Frame_Of_Drinfeld_Modules}
        \Phi_E=\begin{pmatrix}
            0 & 1 & & \\
            \vdots & & \ddots \\
            0 & & & 1\\
            \frac{t-\theta}{\kappa_r^{(-r)}} & \frac{-\kappa_1^{(-1)}}{\kappa_r^{(-r)}} & \cdots &
            \frac{-\kappa_{r-1}^{(-r+1)}}{\kappa_r^{(-r)}} 
        \end{pmatrix}\in\Mat_{r}(\mathbb{C}_\infty[t]).
    \end{equation}
    A matrix $\Psi_E\in\GL_r(\mathbb{T})$ satisfying
    \begin{equation}\label{Eq:Difference_Equation}
        \Psi_E^{(-1)}=\Phi_E\Psi_E.
    \end{equation}
    is called a \emph{rigid analytic trivialization} of $E$. An explicitly construction of $\Psi_E$ is given in \cite[Sec.~3.4]{CP12} by following the method of Pellarin \cite[Sec.~4.2]{Pel08}. According to \cite[Prop.~3.1.3]{ABP04}, we have $\Psi_E\in\GL_r(\mathbb{T})\cap\Mat_r(\mathbb{E})$. In fact, by the argument given in \cite[Def.~3.4.5]{NP21}, we can even conclude that $\Psi_E\in\GL_r(\TT_\theta)$. It is known by Anderson (see also {\cite[Cor.~2.5.23,~Thm.~2.5.32]{HJ20}}, {\cite[Thm.~3.4.7]{NP21}}) that applying $\mathscr{E}_0$ to $\Mat_{1\times r}(\mathbb{F}_q[t])\Psi_E^{-1}$ can be used to recover the periods of $E$. More precisely, we have
    \begin{equation}\label{Eq:E_0_Periods}
        \Lambda_E=\mathscr{E}_0(\Mat_{1\times r}(\mathbb{F}_q[t])\Psi_E^{-1})\subset\mathbb{C}_\infty.
    \end{equation}

\section{Analytic continuation of the Drinfeld logarithms}
    In this section, we first give the detailed calculations for constructing the deformation series of Drinfeld logarithms using a specific choice of a $t$-frame (see \eqref{Eq:T_Frame_Of_Drinfeld_Modules}). Then we apply Furusho's method \cite{Fur20} to obtain an analytic continuation of Drinfeld logarithms.

\subsection{The deformation series of Drinfeld logarithms}
    Let $E=(\mathbb{G}_{a/\mathbb{C}_\infty},\phi)$ be a Drinfeld $\mathbb{F}_q[t]$-module of rank $r$ defined over $\mathbb{C}_\infty$ with
    \[
        \phi_t=\theta+\kappa_1\tau+\cdots+\kappa_r\tau^r\in \mathbb{C}_\infty[\tau].
    \]
    Recall that the logarithm map $\log_E$ is the formal inverse of the exponential map $\exp_E$, and $\log_E$ is only defined in a small region in $\mathbb{C}_\infty$. Let $R_E$ be the radius of convergence of the Drinfeld logarithm $\log_E$. We mention that $R_E$ has been determined explicitly in \cite[(2.11),~Cor.~4.5]{KP21}.
    Let $\xi\in\mathbb{C}_\infty$ with $|\xi|_\infty<R_E$. By \cite{EGP14}, a deformation series of $\log_E(\xi)$ is constructed explicitly
    using the \emph{shadowed partitions} introduced in \cite{EGP13}. More precisely, for each $n,m\in\mathbb{Z}_{>0}$, we let $P_m(n)$ be the collection of $m$-tuples $(S_1,\dots,S_m)$ such that $S_i\subset\{0,1,\dots,n-1\}$ for each $1\leq i\leq m$ and the set $\{S_i+j\mid1\leq i\leq m,~0\leq j\leq i-1\}$ forms a partition of $\{0,\dots,n-1\}$. Here we set $P_r(0)=\{(\emptyset,\dots,\emptyset)\}$ to be the singleton set containing the $r$-tuple of empty sets. Then, for each non-negative integer $n$, we define
    \begin{equation}\label{Eq:Coe_of_Deformation_of_Log}
        \mathcal{B}_n(t):=\sum_{(S_1,\dots,S_r)\in P_r(n)}\prod_{i=1}^r\prod_{j\in S_i}\frac{\kappa_i^{q^j}}{t-\theta^{q^{i+j}}}\in\mathbf{C}_\infty(t).
    \end{equation}
    Here we adopt the convention that $\mathcal{B}_0(t)=1$.
    The deformation series of $\log_E(\xi)$ is given by
    \begin{equation}\label{Eq:Deformation_of_Log}
        \mathcal{L}_E(\xi;t):=\sum_{n=0}^\infty\mathcal{B}_n(t)\xi^{q^n}\in\TT_\theta.
    \end{equation}
    It is known in \cite[Thm~.6.13.(b)]{EGP14} that
    \begin{equation}\label{Eq:Specialization}
        \mathcal{L}_E(\xi;t)\mid_{t=\theta}=\log_E(\xi).
    \end{equation}
    
    In what follows, we provide a different construction of the deformation series \eqref{Eq:Deformation_of_Log}. Let $\mathcal{R}_0:=\Id_r\in\Mat_r(\mathbb{C}_\infty[t])$ and $\mathcal{R}_m:=\left(\Phi_E^{-1}\right)^{(m)}\dots\left(\Phi_E^{-1}\right)^{(1)}\in\Mat_r(\mathbb{C}_\infty[t])$ for each $m\geq 1$. For the convenience of later use, we set $\mathcal{B}_n(t)=0$ if $n<0$. Inspired by the calculation in \cite[Lem.~4.2]{KP21}, we apply the identities established in \cite[Lem.~6.12]{EGP14} to obtain an explicit formula for the first column of $\mathcal{R}_m$.
    
    \begin{lemma}\label{Lem:Formula}
        For $m\geq 0$ and $1\leq i\leq r$, we have
        \[
            [\mathcal{R}_m]_{i1}=\mathcal{B}_{m-(i-1)}(t).
        \]
    \end{lemma}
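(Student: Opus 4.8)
The plan is to prove the formula by induction on $m$, exactly following the structure suggested by the statement. The base case $m=0$ is immediate: $\mathcal{R}_0=\Id_r$, so $[\mathcal{R}_0]_{11}=1=\mathcal{B}_0(t)$ and $[\mathcal{R}_0]_{i1}=0=\mathcal{B}_{-(i-1)}(t)$ for $2\leq i\leq r$ by our convention that $\mathcal{B}_n(t)=0$ for $n<0$. The essential preliminary computation is to write down $\Phi_E^{-1}$ explicitly. Inverting the companion-type matrix $\Phi_E$ in \eqref{Eq:T_Frame_Of_Drinfeld_Modules} gives
\[
    \Phi_E^{-1}=\frac{1}{t-\theta}\begin{pmatrix}
    \kappa_1^{(-1)} & \kappa_2^{(-2)} & \cdots & \kappa_r^{(-r)} \\
    t-\theta & & & 0 \\
     & \ddots & & \vdots \\
     & & t-\theta & 0
    \end{pmatrix},
\]
so that after applying the $m$-th Frobenius twist the $(1,j)$-entry of $(\Phi_E^{-1})^{(m)}$ is $\kappa_j^{(m-j)}/(t-\theta^{q^m})$, while for $2\leq i\leq r$ the row $i$ of $(\Phi_E^{-1})^{(m)}$ has its only nonzero entry $1$ in column $i-1$.

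For the inductive step I would use the recursion $\mathcal{R}_m=(\Phi_E^{-1})^{(m)}\mathcal{R}_{m-1}$ and read off the first column entrywise. For the top entry,
\[
    [\mathcal{R}_m]_{11}=\sum_{j=1}^r[(\Phi_E^{-1})^{(m)}]_{1j}[\mathcal{R}_{m-1}]_{j1}=\sum_{j=1}^r\frac{\kappa_j^{(m-j)}}{t-\theta^{q^m}}\,\mathcal{B}_{m-j}(t),
\]
where the induction hypothesis supplies $[\mathcal{R}_{m-1}]_{j1}=\mathcal{B}_{(m-1)-(j-1)}(t)=\mathcal{B}_{m-j}(t)$. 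The identity \cite[Lem.~6.12]{EGP14} is precisely the combinatorial recursion satisfied by the $\mathcal{B}_n(t)$ built from shadowed partitions, and it identifies this sum with $\mathcal{B}_m(t)$; this is the one external input the argument relies on. For the lower entries $2\leq i\leq r$, the sparse shape of $(\Phi_E^{-1})^{(m)}$ collapses the sum to a single term:
\[
    [\mathcal{R}_m]_{i1}=\sum_{j=1}^r[(\Phi_E^{-1})^{(m)}]_{ij}[\mathcal{R}_{m-1}]_{j1}=[\mathcal{R}_{m-1}]_{(i-1)1}=\mathcal{B}_{(m-1)-(i-2)}(t)=\mathcal{B}_{m-(i-1)}(t),
\]
again by the induction hypothesis, which completes the step.

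The main obstacle, such as it is, lies entirely in the top entry: one must verify that the weighted Frobenius-twisted sum $\sum_{j=1}^r \kappa_j^{(m-j)}\mathcal{B}_{m-j}(t)/(t-\theta^{q^m})$ really equals $\mathcal{B}_m(t)$, i.e.\ that the linear-algebra recursion from $\Phi_E^{-1}$ matches the partition-theoretic recursion defining the coefficients. I expect to discharge this by a direct appeal to \cite[Lem.~6.12]{EGP14} rather than reproving the shadowed-partition identity, so the bulk of the proof is bookkeeping with Frobenius twists and the convention $\mathcal{B}_n(t)=0$ for $n<0$, which conveniently handles all the boundary terms where $m-j<0$ in the top row and where $m-(i-1)<0$ in the lower rows. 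The lower-row cases require no new idea and follow mechanically from the hypothesis.
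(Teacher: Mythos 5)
Your proof is correct and follows essentially the same route as the paper's: induction on $m$, the explicit inverse $\Phi_E^{-1}$ in \eqref{Eq:Inverse_Of_Phi_E}, the recursion $\mathcal{R}_m=(\Phi_E^{-1})^{(m)}\mathcal{R}_{m-1}$, an appeal to \cite[Lem.~6.12]{EGP14} for the top entry, and the sparse shape of the lower rows for the remaining entries. Your treatment of the base case is slightly more explicit about the convention $\mathcal{B}_n(t)=0$ for $n<0$, but the argument is the same.
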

    
    \begin{proof}
        We prove this lemma by using the induction on $m$. Since $\mathcal{R}_0=\Id_r$ and $\mathcal{B}_0=1$, the desired result holds in the case $m=0$. For the general case, we recall that $\Phi_E$ is defined in \eqref{Eq:T_Frame_Of_Drinfeld_Modules}, and the direct calculation shows that
        \begin{equation}\label{Eq:Inverse_Of_Phi_E}
            \Phi_E^{-1}=\frac{1}{t-\theta}\begin{pmatrix}
            \kappa_1^{(-1)} & \kappa_2^{(-2)} & \cdots & \kappa_r^{(-r)} \\
            t-\theta & & & 0 \\
             & \ddots & & \vdots \\
             & & t-\theta & 0
            \end{pmatrix}\in\Mat_r(\mathbb{C}_\infty[t])\cap\GL_{r}(\mathbb{C}_\infty(t)).
        \end{equation}
        Since $\mathcal{R}_m=\left(\Phi_E^{-1}\right)^{(m)}\mathcal{R}_{m-1}$, we may apply \cite[Lem.~6.12]{EGP14} and the induction hypothesis to obtain
        \[
            [\mathcal{R}_m]_{11}=\sum_{j=1}^r[\left(\Phi_E^{-1}\right)^{(m)}]_{1j}[\mathcal{R}_{m-1}]_{j1}=\sum_{j=1}^r\frac{\kappa_j^{(m-j)}}{t-\theta^{q^m}}\mathcal{B}_{m-j}(t)=\mathcal{B}_m(t).
        \]
        The remaining cases $2\leq i\leq r$ follow immediately from the induction hypothesis and the direct calculation
        \[
            [\mathcal{R}_m]_{i1}=\sum_{j=1}^r[\left(\Phi_E^{-1}\right)^{(m)}]_{ij}[\mathcal{R}_{m-1}]_{j1}=[\mathcal{R}_{m-1}]_{(i-1)1}=\mathcal{B}_{m-(i-1)}(t).
        \]
    \end{proof}
    
    The following result is an immediate consequence of Lemma~\ref{Lem:Formula}, which provides a different expression of the deformation series given in \eqref{Eq:Deformation_of_Log}.
     
     \begin{proposition}\label{Prop:Connection}
        Let $\xi\in\mathbb{C}_\infty$ with $|\xi|_\infty<R_E$. Then
        \[
            \mathcal{L}_E(\xi;t)=\xi+\sum_{i=1}^\infty(\xi^{q^i},0,\dots,0)\left(\Phi_E^{-1}\right)^{(i)}\dots\left(\Phi_E^{-1}\right)^{(1)}(1,0,\dots,0)^\tr\in\TT_\theta.
        \]
     \end{proposition}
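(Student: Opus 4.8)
The plan is to read the claimed right-hand side as nothing more than a rewriting of the deformation series $\mathcal{L}_E(\xi;t)$ from \eqref{Eq:Deformation_of_Log}, obtained by feeding in the formula of Lemma~\ref{Lem:Formula}. First I would observe that, by the very definitions of $\mathcal{R}_i$ given just before the lemma, the iterated Frobenius-twisted product appearing in the statement is exactly $\mathcal{R}_i$, that is $\left(\Phi_E^{-1}\right)^{(i)}\cdots\left(\Phi_E^{-1}\right)^{(1)}=\mathcal{R}_i$. The central computation is then to unwind the scalar $(\xi^{q^i},0,\dots,0)\,\mathcal{R}_i\,(1,0,\dots,0)^\tr$: multiplying $\mathcal{R}_i$ on the left by the row vector $(\xi^{q^i},0,\dots,0)$ scales its first row by $\xi^{q^i}$, and the subsequent multiplication by the column vector $(1,0,\dots,0)^\tr$ selects the first entry of that row. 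Hence the $i$-th summand equals $\xi^{q^i}\,[\mathcal{R}_i]_{11}$.

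Next I would invoke Lemma~\ref{Lem:Formula} with row index $i=1$, which yields $[\mathcal{R}_m]_{11}=\mathcal{B}_m(t)$ for every $m\geq 0$. Substituting this identification term by term, the displayed series collapses to $\sum_{i=1}^\infty \mathcal{B}_i(t)\,\xi^{q^i}$. Folding the leading summand $\xi$ into this sum via the convention $\mathcal{B}_0(t)=1$ (consistent with $\mathcal{R}_0=\Id_r$), the whole expression becomes $\sum_{i=0}^\infty \mathcal{B}_i(t)\,\xi^{q^i}$, which by \eqref{Eq:Deformation_of_Log} is precisely $\mathcal{L}_E(\xi;t)$.

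Since the hypothesis $|\xi|_\infty<R_E$ already guarantees that $\mathcal{L}_E(\xi;t)\in\TT_\theta$ converges, no separate convergence estimate is needed and the argument reduces to a direct term-by-term identification. The only points requiring care are the bookkeeping of indices in the matrix product and the correct application of the convention $\mathcal{B}_0(t)=1$; there is no genuine obstacle, which is exactly why the proposition is labelled an immediate consequence of Lemma~\ref{Lem:Formula}.
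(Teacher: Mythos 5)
Your proposal is correct and follows essentially the same route as the paper: both use Lemma~\ref{Lem:Formula} to identify the relevant entry of the matrix product $\left(\Phi_E^{-1}\right)^{(i)}\cdots\left(\Phi_E^{-1}\right)^{(1)}$ with $\mathcal{B}_i(t)$, then absorb the leading term $\xi$ via the convention $\mathcal{B}_0(t)=1$ to recover the series \eqref{Eq:Deformation_of_Log}. The only cosmetic difference is that the paper first computes the full first column $\left(\mathcal{B}_i(t),\dots,\mathcal{B}_{i-(r-1)}(t)\right)^\tr$ and then contracts with the row vector, whereas you extract the $(1,1)$ entry directly; the computation is the same.
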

    
    \begin{proof}
        Note that $\mathcal{B}_0(t)=1$ and Lemma~\ref{Lem:Formula} shows that
        \[
            \left(\Phi_E^{-1}\right)^{(i)}\dots\left(\Phi_E^{-1}\right)^{(1)}(1,0,\dots,0)^\tr=(\mathcal{B}_{m}(t),\dots,\mathcal{B}_{m-(r-1)}(t)).
        \]
        It follows that
        \[
            \xi+\sum_{i=1}^\infty(\xi^{q^i},0,\dots,0)\left(\Phi_E^{-1}\right)^{(i)}\dots\left(\Phi_E^{-1}\right)^{(1)}(1,0,\dots,0)^\tr=\sum_{n=0}^\infty\mathcal{B}_n(t)\xi^{q^n}=\mathcal{L}_E(\xi;t).
        \]
    \end{proof}
    
    \begin{example}\label{Ex:Carlitz_Logarithm}
        Let $C=(\mathbb{G}_{a/\mathbb{C}_\infty},[\cdot])$ be the Carlitz module, which is a Drinfeld module of rank one with
        \begin{align*}
            [\cdot]:\mathbb{F}_q[t]&\to\mathbb{C}_\infty[\tau]\\
            t&\mapsto[t]:=\theta+\tau.
        \end{align*}
        Let $\mathcal{M}_C=\mathbb{C}_\infty[\tau]$ be the associated dual $t$-motive of $C$. Then $\Phi_C$ defined in \eqref{Eq:T_Frame_Of_Drinfeld_Modules} is given by $\Phi_C=(t-\theta)\in\mathbb{C}_\infty[t]$. As studied in \cite{Pap08}, if $\xi\in\mathbb{C}_\infty$ with $|\xi|_\infty<R_C=q^{q/(q-1)}$, then the deformation series of the Carlitz logarithm is given by
        \[
            \mathcal{L}_C(\xi;t)=\xi+\sum_{i=1}^\infty\frac{\xi^{q^i}}{(t-\theta^q)(t-\theta^{q^2})\cdots(t-\theta^{q^i})}\in\TT_\theta
        \]
        which coincides with Proposition~\ref{Prop:Connection}
        \[
            \mathcal{L}_C(\xi;t)=\xi+\sum_{i=1}^\infty\xi^{q^i}\left(\Phi_C^{-1}\right)^{(i)}\dots\left(\Phi_C^{-1}\right)^{(1)}.
        \]
    \end{example}
    
    \begin{remark}\label{Rem:Comparision_Anderson}
        In this remark, we explain how Proposition~\ref{Prop:Connection} is related to the more general theory due to Anderson. Let $E$ be a Drinfeld module of rank $r$ defined over $\mathbb{C}_\infty$ with the associated dual $t$-motive $\mathcal{M}_E$ and the $t$-frame $(\iota,\Phi_E)$ given in \eqref{Eq:T_Frame_Of_Drinfeld_Modules}. If we set $\mathscr{E}_1:=\epsilon_1\circ\iota$, then Anderson's exponentiation theorem (see \cite[Thm.~2.5.21]{HJ20}, \cite[Thm.~3.4.2]{NP21}) shows that for $\mathbf{h}\in\Mat_{1\times r}(\mathbb{C}_\infty[t])$ and $\mathbf{g}\in\Mat_{1\times r}(\TT_\theta)$ with
        \[
            \mathbf{g}^{(-1)}\Phi_E-\mathbf{g}=\mathbf{h},
        \]
        we have
        \[
            \exp_E(\mathscr{E}_0(\mathbf{g}+\mathbf{h}))=\mathscr{E}_1(\mathbf{h}).
        \]
        Now we identify $\mathbb{C}_\infty$ as a subset of $\Mat_{1\times r}(\mathbb{C}_\infty[t])$ by setting
        \begin{align*}
            \mathbb{C}_\infty&\to\Mat_{1\times r}(\mathbb{C}_\infty[t])\\
            \xi&\mapsto\mathbf{h}_\xi:=(\xi,0,\dots,0).
        \end{align*}
        For $\xi\in\mathbb{C}_\infty$ with sufficiently small $|\xi|_\infty$, we have
        \[
            \mathbf{g}_\xi:=\sum_{i=1}^\infty(\xi^{q^i},0,\dots,0)\left(\Phi_E^{-1}\right)^{(i)}\dots\left(\Phi_E^{-1}\right)^{(1)}\in\Mat_{1\times r}(\TT_\theta).
        \]
        One verifies directly that
        \[
            \mathbf{g}_\xi^{(-1)}\Phi_E-\mathbf{g}_\xi=\mathbf{h}_\xi.
        \]
        Hence by Anderson's exponentiation theorem we conclude that
        \[
            \exp_E(\mathscr{E}_0(\mathbf{g}_\xi+\mathbf{h}_\xi))=\mathscr{E}_1(\mathbf{h}_\xi)=\xi.
        \]
        Then \cite[Ex.~3.5.14]{NP21} provides a way to illustrate the expression
        \[
            \mathcal{L}_E(\xi;t)=(\mathbf{g}_\xi+\mathbf{h}_\xi)(1,0,\dots,0)^\tr
        \]
        given in Proposition~\ref{Prop:Connection}. The author is grateful to Matt Papanikolas for helpful discussion on this direction.
    \end{remark}
    
    \begin{remark}\label{Rem:Comparision_ANDTR}
        In what follows, we recall the notion of \emph{the inverse of the Frobenius} introduced in \cite[Section~2]{ANDTR20}. For our purpose, we restrict ourselves to the setting of Drinfeld modules, but in the meantime we also give a bit detailed description using the specific $t$-frame given in \eqref{Eq:T_Frame_Of_Drinfeld_Modules}. Let $P_0(t)=1$. For $j\in\mathbb{Z}_{>0}$, since $t-\theta^{q^j}$ is relatively prime to $t-\theta$ in $\mathbb{C}_\infty[t]$, we can choose $P_j(t)\in\mathbb{C}_\infty[t]$ which serves as the multiplicative inverse of $t-\theta^{q^j}$ in the quotient ring $\mathbb{C}_\infty[t]/(t-\theta)\mathbb{C}_\infty[t]$, that is, we have
        \[
            P_j(t)(t-\theta^{q^j})\equiv 1~(\mbox{mod}~(t-\theta)\mathbb{C}_\infty[t]).
        \]
        
        Let $E$ be a Drinfeld module of rank $r$ defined over $\mathbb{C}_\infty$ with the associated dual $t$-motive $\mathcal{M}_E$ and the $t$-frame $(\iota,\Phi_E)$ given in \eqref{Eq:T_Frame_Of_Drinfeld_Modules}. It has been shown in \eqref{Eq:Inverse_Of_Phi_E} that $(t-\theta)\Phi_E^{-1}\in\Mat_r(\mathbb{C}_\infty[t])$. Let $\mathbf{h}=(h_1,\dots,h_r)\in\Mat_{1\times r}(\mathbb{C}_\infty[t])$. Then for each $j\in\mathbb{Z}_{>0}$, one can verify that
        \begin{align*}
            \mathbf{y}_{\mathbf{h},j}:&=\mathbf{h}^{(j)}\left((t-\theta)\Phi_E^{-1}\right)^{(j)}\cdots\left((t-\theta)\Phi_E^{-1}\right)^{(1)}\\
            &=(t-\theta^{q^j})\cdots(t-\theta^q)\mathbf{h}^{(j)}\left(\Phi_E^{-1}\right)^{(j)}\cdots\left(\Phi_E^{-1}\right)^{(1)}
        \end{align*}
        is the unique element in $\Mat_{1\times r}(\mathbb{C}_\infty[t])$ satisfying
        \[
            \sigma^j\iota(\mathbf{y}_{\mathbf{h},j})=(t-\theta)(t-\theta^{q^{-1}})\cdots(t-\theta^{q^{1-j}})\iota(\mathbf{h}).
        \]
        If $\xi\in\mathbb{C}_\infty$, then we adopt the same notation as in the previous remark that $\mathbf{h}_\xi=(\xi,0,\dots,0)\in\Mat_{1\times r}(\mathbb{C}_\infty[t])$. The $j$-th inverse of the Frobenius is defined by
        \begin{align*}
            \varphi_j:\mathbb{C}_\infty&\to\mathbb{C}_\infty\\
            \xi&\mapsto\epsilon_0(P_0(t)\cdots P_j(t)\iota(\mathbf{y}_{\mathbf{h}_\xi,j})).
        \end{align*}
        We write $\log_E(X)=\sum_{i=0}^\infty Q_iX^{q^i}$. Then \cite[Prop.~2.2]{ANDTR20} shows that for each $j\in\mathbb{Z}_{>0}$
        \[
            \varphi_j(\xi)=Q_j\xi^{q^j}.
        \]
        In particular, suppose that $\xi\in\mathbb{C}_\infty$ with $|\xi|_\infty<R_E$, then
        \[
            \log_E(\xi)=\sum_{i=0}^\infty\varphi_i(\xi).
        \]
        
        In fact, Nathan Green pointed out to the author that the map $\epsilon_0$ can be extended to $\mathcal{M}_E\otimes_{\mathbb{C}_\infty[t]}\mathbb{C}_\infty[t]_{(t-\theta)}$, where $\mathbb{C}_\infty[t]_{(t-\theta)}$ is the localization of $\mathbb{C}_\infty[t]$ at the prime ideal $(t-\theta)\mathbb{C}_\infty[t]$ (see \cite[Prop.~5.6]{HJ20}). One can deduce immediately that
        \[
            \varphi_j(\xi)=(\epsilon_0\circ\iota)\left((\xi^{q^j},0,\dots,0)(\Phi_E^{-1})^{(j)}\cdots(\Phi_E^{-1})^{(1)}\right).
        \]
        This gives another way to understand our construction in Proposition~\ref{Prop:Connection}. Note that this idea has been used in the study of the logarithms of the iterated extensions of the Carlitz tensor powers (see \cite{CGM20}) and the log-algebraic identities for Anderson $t$-modules (see Step $1$ in the proof of \cite[Thm.~3.5]{GND20}).
    \end{remark}

\subsection{Furusho's analytic continuation}
    We introduce a generalization of Furusho's $\mathbb{F}_q[t]$-linear map 
    \begin{align*}
        \wp:\mathbb{T}&\to\mathbb{T}\\
        Z&\mapsto Z-Z^{(1)}
    \end{align*}
    which plays a crucial role in the analytic continuation. The proof of the following lemma is essentially the same as in \cite[Lem.~1.1.1]{Fur20}.
    \begin{lemma}[cf.~{\cite[Lem.~1.1.1]{Fur20}}]\label{Lem:Key_Lemma}
        Let 
        \begin{align*}
            \wp_r:\Mat_{1\times r}(\mathbb{T})&\to\Mat_{1\times r}(\mathbb{T})\\
            (Z_1,\dots,Z_r)&\mapsto(\wp(Z_1),\dots,\wp(Z_r))=(Z_1,\dots,Z_r)-(Z_1,\dots,Z_r)^{(1)}.
        \end{align*}
        Then
        \begin{enumerate}
            \item $\wp_r$ is surjective, and the preimage $\wp_r^{-1}(h_1,\dots,h_r)$ for each $(h_1,\dots,h_r)\in\Mat_{1\times r}(\mathbb{T})$ is given by $(h'_1,\dots,h'_r)+\mathbb{F}_q[t]^r$ for some $(h'_1,\dots,h'_r)\in\Mat_{1\times r}(\mathbb{T})$.
            \item For any $\mathbf{f}=(f_1,\dots,f_r)\in\Mat_{1\times r}(\mathbb{T})$, $\mathbf{f}$ and $\wp_r(\mathbf{f})$ have the same radius of convergence.
            \item If $V\subset\Mat_{1\times r}(\mathbb{T})$ is an $\mathbb{F}_q[t]$-submodule, then so is $\wp_r^{-1}(V)$.
            \item $\wp_r^{-1}(\Mat_{1\times r}(\mathbb{E}))=\Mat_{1\times r}(\mathbb{E})$.
        \end{enumerate}
    \end{lemma}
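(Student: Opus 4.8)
The plan is to reduce the whole statement to the scalar case $r=1$, since $\wp_r$ is obtained by applying $\wp$ in each coordinate and the kernel factorizes as $\ker\wp_r=(\ker\wp)^r$. Writing $Z=\sum_{i\geq 0}d_it^i\in\mathbb{T}$, the twist gives $Z^{(1)}=\sum_i d_i^q t^i$, so $\wp(Z)=\sum_i(d_i-d_i^q)t^i$; thus $\wp$ acts on each coefficient through the additive Artin--Schreier map $d\mapsto d-d^q$ on $\mathbb{C}_\infty$, whose kernel is $\mathbb{F}_q$. Since a power series with coefficients in $\mathbb{F}_q$ lies in $\mathbb{T}$ only if almost all of them vanish (a nonzero element of $\mathbb{F}_q$ has $\infty$-adic norm $1$), one reads off $\ker\wp=\mathbb{F}_q[t]$, and $\wp$ is visibly $\mathbb{F}_q[t]$-linear because $a^{(1)}=a$ for $a\in\mathbb{F}_q[t]$.

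For surjectivity in part (1), given $h=\sum_i c_it^i\in\mathbb{T}$ I would solve $d_i-d_i^q=c_i$ coefficientwise. The key non-archimedean input is that whenever $|c_i|_\infty<1$ the series $d_i:=\sum_{j\geq 0}c_i^{q^j}$ converges and solves the equation with $|d_i|_\infty=|c_i|_\infty$ (the first term dominates in the ultrametric, and $|c_i^{q^j}|_\infty=|c_i|_\infty^{q^j}\to 0$). Because $h\in\mathbb{T}$ forces $|c_i|_\infty\to 0$, all but finitely many indices satisfy $|c_i|_\infty<1$; for the remaining finitely many I pick any root of $X-X^q=c_i$, which exists since $\mathbb{C}_\infty$ is algebraically closed. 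The resulting $Z=\sum_i d_it^i$ then has $|d_i|_\infty\to 0$, so $Z\in\mathbb{T}$ and $\wp(Z)=h$, while the description of the fibre as $Z+\mathbb{F}_q[t]^r$ is immediate from $\ker\wp_r=\mathbb{F}_q[t]^r$. Part (3) is then formal: $\wp_r$ is $\mathbb{F}_q[t]$-linear, so the preimage of a submodule is a submodule.

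Parts (2) and (4) both hinge on the same tail estimate. Since $|c_i|_\infty<1$ for all large $i$, we have $|c_i-c_i^q|_\infty=|c_i|_\infty$ for all large $i$, and as the radius of convergence $1/\limsup_i|c_i|_\infty^{1/i}$ depends only on the tail, $\mathbf{f}$ and $\wp_r(\mathbf{f})$ share it, giving (2). For (4), the inclusion $\Mat_{1\times r}(\mathbb{E})\subseteq\wp_r^{-1}(\Mat_{1\times r}(\mathbb{E}))$ follows because $f\mapsto f^{(1)}$ preserves entireness (it raises each coefficient norm to the $q$-th power, keeping $\limsup_i|c_i|_\infty^{1/i}=0$) and keeps the coefficients in the same finite extension of $k_\infty$. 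For the reverse inclusion, suppose $\wp(Z)=h\in\mathbb{E}$ with $Z\in\mathbb{T}$; arguing coefficientwise, for large $i$ the given $d_i$ must agree with $\sum_{j\geq 0}c_i^{q^j}$ up to an element of $\mathbb{F}_q$ which is forced to vanish by $|d_i|_\infty\to 0$, whence $|d_i|_\infty=|c_i|_\infty$ and $Z$ is entire. Moreover each such $d_i$ converges inside the finite (hence complete) extension $L/k_\infty$ carrying all the $c_i$, while the finitely many exceptional $d_i$ are algebraic over $L$, so all $d_i$ lie in one finite extension of $k_\infty$ and $Z\in\mathbb{E}$.

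The main obstacle I anticipate is the careful non-archimedean bookkeeping in parts (1) and (4): one must verify that the coefficientwise Artin--Schreier solutions assemble into a genuine element of $\mathbb{T}$ (respectively $\mathbb{E}$), which forces one to separate the finitely many coefficients of norm $\geq 1$ from the tail and to check both that $|d_i|_\infty\to 0$ and that all $d_i$ lie in a single finite extension of $k_\infty$. Everything else is either a coordinatewise reduction or a formal consequence of the $\mathbb{F}_q[t]$-linearity of $\wp_r$.
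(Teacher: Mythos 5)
Your proof is correct, and it is essentially the argument the paper relies on: the paper gives no proof of this lemma itself, stating only that ``the proof is essentially the same as in \cite[Lem.~1.1.1]{Fur20},'' and Furusho's method is precisely your coefficientwise Artin--Schreier analysis (solving $d_i - d_i^q = c_i$ by the geometric-type series $\sum_{j\geq 0} c_i^{q^j}$ on the tail, with ultrametric bookkeeping and the kernel $\mathbb{F}_q[t]$ accounting for the ambiguity). Your componentwise reduction to $r=1$ and the separate treatment of the finitely many coefficients of norm $\geq 1$ are exactly the right adaptations, so nothing is missing.
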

    
    For $(Z_1,\dots,Z_r)\in\Mat_{1\times r}(\mathbb{T})$, we introduce the series
    \[
        \mathscr{L}_0(Z_1,\dots,Z_r):=\sum_{i=0}^\infty(Z_1,\dots,Z_r)^{(i)}.
    \]
    Note that when $||(Z_1,\dots,Z_r)||<1$, it converges in $\Mat_{1\times r}(\TT)$ and it is $\mathbb{F}_q[t]$-linear with respect to $(Z_1,\dots,Z_r)$. Moreover, we have
    \[
        \wp_r(\mathscr{L}_0(Z_1,\dots,Z_r))=(Z_1,\dots,Z_r).
    \]
    Now Lemma~\ref{Lem:Key_Lemma} ensures that we can associate each $(Z_1,\dots,Z_r)\in\Mat_{1\times r}(\mathbb{T})$ with an element $\mathscr{L}_0(Z_1,\dots,Z_r)$ in the quotient $\mathbb{F}_q[t]$-submodule $\Mat_{1\times r}(\mathbb{T})/\Mat_{1\times r}(\mathbb{F}_q[t])$. Then, we deduce an extended $\mathbb{F}_q[t]$-linear map
    \begin{align*}
        \overset{\rightarrow}{\mathscr{L}_0}:\Mat_{1\times r}(\mathbb{T})&\to\Mat_{1\times r}(\mathbb{T})/\Mat_{1\times r}(\mathbb{F}_q[t])\\
        (Z_1,\dots,Z_r)&\mapsto\wp_r^{-1}(Z_1,\dots,Z_r).
    \end{align*}
    By following the terminology in \cite{Fur20}, we call $\mathscr{L}_0^\circ:\Mat_{1\times r}(\mathbb{T})\to\Mat_{1\times r}(\mathbb{T})$ a branch of $\overset{\rightarrow}{\mathscr{L}_0}$ whenever $\mathscr{L}_0^\circ$ is an $\mathbb{F}_q$-linear lift of $\overset{\rightarrow}{\mathscr{L}_0}$.
    
    Let $E$ be a Drinfeld module of rank $r$ defined over $\mathbb{C}_\infty$. Recall that it has the associated dual $t$-motive $\mathcal{M}_E$ with the $t$-frame $(\iota,\Phi_E)$ given in \eqref{Eq:T_Frame_Of_Drinfeld_Modules}.
    Recall that it has a rigid analytic trivialization $\Psi_E\in\GL_r(\mathbb{T}_\theta)\cap\Mat_r(\mathbb{E})\subset\Mat_r(\TT)$ satisfying the Frobenius difference equation
    \[
        \Psi_E^{(-1)} = \Phi_E\Psi_E.
    \]
    Let $\{\mathbf{e}_j\}_{j=1}^r$ be the standard $\TT$-basis of $\Mat_{1\times r}(\TT)$. For $Z\in\mathbb{T}$, we introduce the series
    \[
        \mathscr{L}_E(Z):=\mathscr{L}_0(Z\mathbf{e}_1\Psi_E)\Psi_E^{-1}\mathbf{e}_1^\tr.
    \]
    Note that when $||Z\mathbf{e}_1\Psi_E||<1$, it converges in $\TT$ and it is $\mathbb{F}_q[t]$-linear with respect to $Z$.
    The series $\mathscr{L}_E(Z)$ can be formally expressed as follows:
    \begin{align*}
        \mathscr{L}_E(Z)&=\mathscr{L}_0(Z\mathbf{e}_1\Psi_E)\Psi_E^{-1}\mathbf{e}_1^\tr\\
        &=\sum_{i=0}^\infty Z^{(i)}\mathbf{e}_1\Psi_E^{(i)}\Psi_E^{-1}\mathbf{e}_1^\tr\\
        &=Z+\sum_{i=1}^\infty(Z^{(i)},0,\dots,0)\left(\Phi_E^{-1}\right)^{(i)}\dots\left(\Phi_E^{-1}\right)^{(1)}(1,0,\dots,0)^\tr
    \end{align*}
    which agrees with the expression of Proposition~\ref{Prop:Connection}. In particular, when $\xi\in\mathbb{C}_\infty$ with $||\xi\mathbf{e}_1\Psi_E||<1$ and $|\xi|_\infty<R_E$, we have $\mathscr{L}(\xi)=\mathcal{L}_E(\xi;t)\in\TT$.
    
    Now, we define the following $\mathbb{F}_q[t]$-linear map, which can be regarded as the continuation of the deformation series of the Drinfeld logarithm. We set
    \[
        \overset{\rightarrow}{\mathscr{L}_E}:\mathbb{T}\to\mathbb{T}/\left(\Mat_{1\times r}(\mathbb{F}_q[t])\Psi_E^{-1}\mathbf{e}_1^\tr\right)
    \]
    by sending $Z\in\mathbb{T}$ to
    \[
        \overset{\rightarrow}{\mathscr{L}_E}(Z):=\overset{\rightarrow}{\mathscr{L}_0}(Z\mathbf{e}_1\Psi_E)\Psi_E^{-1}\mathbf{e}_1^\tr.
    \]
    By following the terminology in \cite{Fur20}, we call $\mathscr{L}_E^\circ:\mathbb{T}\to\mathbb{T}$ a branch of $\overset{\rightarrow}{\mathscr{L}_E}$ whenever $\mathscr{L}_E^\circ$ is an $\mathbb{F}_q$-linear lift of $\overset{\rightarrow}{\mathscr{L}_E}$.

    In what follows, we carry out the analytic continuation of Drinfeld logarithms by the specialization $t=\theta$. Recall from \cite[Ex.~3.5.14]{NP21} that for $\mathbf{a}=(\alpha_1,\dots,\alpha_r)\in\Mat_{1\times r}(\mathbb{T}_\theta)$ we have
    \[
        \mathscr{E}_0(\mathbf{a})=(\mathbf{a}\mathbf{e}_1^\tr)\mid_{t=\theta}=\alpha_1(\theta).
    \]
    In particular, by \eqref{Eq:E_0_Periods} we have
    \[
        \Lambda_E=\mathscr{E}_0(\Mat_{1\times r}(\mathbb{F}_q[t])\Psi_E^{-1})=\left(\Mat_{1\times r}(\mathbb{F}_q[t])\Psi_E^{-1}\mathbf{e}_1^\tr\right)\mid_{t=\theta}\subset\mathbb{C}_\infty.
    \]
    Let $\xi\in\mathbb{C}_\infty$. By Lemma~\ref{Lem:Key_Lemma}, we see that $t=\theta$ is contained in the radius of the convergence of each entry of any representative in the coset
    \[
        \overset{\rightarrow}{\mathscr{L}_0}(\xi\mathbf{e}_1\Psi_E)=\wp_r^{-1}(\xi\mathbf{e}_1\Psi_E).
    \]
    Since $\Psi_E\in\GL_r(\TT_\theta)$, we conclude that $\overset{\rightarrow}{\mathscr{L}}_E(\xi)=\overset{\rightarrow}{\mathscr{L}_0}(\xi\mathbf{e}_1\Psi_E)\Psi_E^{-1}\mathbf{e}_1^\tr$ is well-defined at $t=\theta$.
    Thus, we can define the following $\mathbb{F}_q$-linear map
    \begin{align*}
        \overset{\rightarrow}{\log_E}:\mathbb{C}_\infty&\to\mathbb{C}_\infty/\Lambda_E\\
        \xi&\mapsto\overset{\rightarrow}{\mathscr{L}}_E(\xi)\mid_{t=\theta}.
    \end{align*}
    A branch $\log_E^\circ:\mathbb{C}_\infty\to\mathbb{C}_\infty$ means an $\mathbb{F}_q$-linear lift of $\overset{\rightarrow}{\log_E}$.
    
    \begin{example}
        Let $C=(\mathbb{G}_{a/\mathbb{C}_\infty},[\cdot])$ be the Carlitz module given in Example~\ref{Ex:Carlitz_Logarithm}. Recall that the $t$-frame $(\iota,\Phi_C)$ of the associated dual $t$-motive $\mathcal{M}_C$ is given by $\Phi_C=(t-\theta)\in\mathbb{C}_\infty[t]$. It has a rigid analytic trivialization \begin{equation}\label{Eq:Anderson_Thakur_Series}
            \Omega(t):=(-\theta)^{\frac{-q}{q-1}}\prod_{i=1}^\infty\left(1-\frac{t}{\theta^{q^i}}\right)\in\mathbb{E},
        \end{equation}
        where $(-\theta)^{1/(q-1)}$ is a fixed choice of the $(q-1)$-st root of $(-\theta)$. In what follows, we provide a specific choice of the representative $\log_C^\circ(\alpha)$ of $\overset{\rightarrow}{\log_C}(\alpha)\in\mathbb{C}_\infty/\Lambda_C$ so that $\log_C^\circ(\alpha)=\log_C(\alpha)$ whenever $\alpha\in k_\infty$ with $|\alpha|_\infty<q^{q/(q-1)}$.
        
        Recall that
        \[
            \wp^{-1}(\alpha\Omega)=\{f=\sum_{i\geq 0}f_it^i\in\TT\mid f-f^{(1)}=\alpha\Omega\}\subset\mathbb{E}.
        \]
        We aim to find an element $f=\sum_{i\geq 0}f_it^i\in\wp^{-1}(\alpha\Omega)$ so that $f/\Omega\in k_\infty\llbracket t\rrbracket\cap\TT_\theta$. In particular, $\log_C^\circ(\alpha):=(f/\Omega)\mid_{t=\theta}$ gives a representative of lifts of $\overset{\rightarrow}{\log_C}(\alpha)\in\mathbb{C}_\infty/\Lambda_C$ in $k_\infty$. Then the fact
        \[
            \left(\log_C(\alpha)+\Lambda_C\right)\cap k_\infty=\{\log_C(\alpha)\}
        \]
        implies that $\log_C^\circ(\alpha)=\log_C(\alpha)$. The rest of this example will be occupied by constructing such $f=\sum_{i\geq 0}f_it^i\in\wp^{-1}(\alpha\Omega)$ so that $f/\Omega\in k_\infty\llbracket t\rrbracket\cap\TT_\theta$.
        
        We begin by noticing that if we express $\Omega(t)=\sum_{i\geq 0}a_it^i$, then the Frobenius difference equation $\Omega^{(-1)}=(t-\theta)\Omega$ implies that
        \begin{equation}\label{Eq:recursive}
            a_i+\theta^qa_i^q=a_{i-1}^q,~i\geq 0.
        \end{equation}
        Here we adopt the convention $a_{-1}:=0$. Since
        \[
            \Omega(t)=(-\theta)^{\frac{-q}{q-1}}\prod_{i=1}^\infty\left(1-\frac{t}{\theta^{q^i}}\right),
        \]
        we have $a_0=(-\theta)^{\frac{-q}{q-1}}$ and $||\Omega||=|a_0|_\infty=q^{-q/(q-1)}$, which implies that $a_i/a_0\in k_\infty$ and $|a_i/a_0|_\infty\leq 1$. To find the desired $f=\sum_{i\geq 0}f_it^i\in\wp^{-1}(\alpha\Omega)$, it is enough to determine $f_i\in\mathbb{C}_\infty$ so that $f_i/a_0\in k_\infty$. For $\alpha\in k_\infty$ with $|\alpha|_\infty<q^{q/(q-1)}$ and $i\in\mathbb{Z}_{\geq 0}$, the requirement $f\in\wp^{-1}(\alpha\Omega)$ enforces that
        \begin{equation}\label{Eq:Solving_Coefficients_1}
            f_i-f_i^q=\alpha a_i.
        \end{equation}
        By \eqref{Eq:recursive}, we know that $a_0=-\theta^qa_0^q$, and thus \eqref{Eq:Solving_Coefficients_1} becomes
        \begin{equation}\label{Eq:Solving_Coefficients_2}
            \left(\frac{f_i}{a_0}\right)^q+\theta^q\left(\frac{f_i}{a_0}\right)-\alpha\theta^q\left(\frac{a_i}{a_0}\right)=0.
        \end{equation}
        We claim that the equation $Y_i^q+\theta^qY_i-\alpha\theta^q(a_i/a_0)=0$ admits a root in $k_\infty$. To see this claim, let $\mathrm{ord}_\infty(\cdot)$ be the normalized valuation on $k_\infty$ so that $\mathrm{ord}_\infty(\theta)=-1$. Then we have $\mathrm{ord}_\infty(\alpha\theta^q(a_i/a_0))\in\mathbb{Z}$ and
        \[
            \mathrm{ord}_\infty(\alpha\theta^q(a_i/a_0))=\mathrm{ord}_\infty(\alpha)-q+\mathrm{ord}_\infty(a_i/a_0)\geq -1-q.
        \]
        Therefore, the Newton polygon of $Y_i^q+\theta^qY_i-\alpha\theta^q(a_i/a_0)=0$ consists of three points $(0,\mathrm{ord}_\infty(\alpha\theta^q(a_i/a_0))),~(1,-q),~(q,0)$, which always has an edge of length one with integer slope. This leads to the claim and the desired coefficients $f_i$.
        
        We mention that this example gives a specific choice of \cite[Rem.~1.3.6]{Fur20} in the case of $n=1$ together with some explicit analysis. Moreover, we demonstrate how this method can be used to construct some non-zero elements in $\exp_C^{-1}(k_\infty)\cap k_\infty$ without knowing the logarithmic series $\log_C(X)\in k_\infty\llbracket X\rrbracket$ precisely.
    \end{example}
    
    The following theorem ensures that $\overset{\rightarrow}{\log_E}$ is an analytic continuation of $\log_E$.
    \begin{theorem}[cf.~{\cite[Prop.~1.3.4]{Fur20}}]\label{Thm:Main_Thm_1}
        Let $\xi\in\mathbb{C}_\infty$. Then the following assertions hold.
        \begin{enumerate}
            \item If $|\xi|_\infty<R_E$, then $\overset{\rightarrow}{\log_E}(\xi)\equiv\log_E(\xi)~(\mathrm{mod}~\Lambda_E)$.
            \item $\overset{\rightarrow}{\log_E}$ locally admits an analytic lifts, that is, for any point $\xi\in\mathbb{C}_\infty$, there is a closed disk $\mathbb{D}\subset\mathbb{C}_\infty$ with center $\xi$ and a branch $\log_E^\circ$ so that $\log_E^\circ\mid_{\mathbb{D}}:\mathbb{D}\to\mathbb{C}_\infty$ is induced by a power series.
        \end{enumerate}
    \end{theorem}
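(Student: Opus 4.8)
The plan is to treat the two assertions separately, reducing the analytic statement in (2) to the congruence in (1) via $\mathbb{F}_q$-linearity, exactly in the spirit of \cite[Prop.~1.3.4]{Fur20}.

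For (1), I would show that the deformation series $\mathcal{L}_E(\xi;t)$ represents the coset $\overset{\rightarrow}{\mathscr{L}_E}(\xi)$ for \emph{every} $\xi$ with $|\xi|_\infty<R_E$, not only for those $\xi$ so small that the geometric series defining $\mathscr{L}_0(\xi\mathbf{e}_1\Psi_E)$ converges. Adopting the notation $\mathbf{h}_\xi=(\xi,0,\dots,0)$ and $\mathbf{g}_\xi=\sum_{i\geq 1}(\xi^{q^i},0,\dots,0)(\Phi_E^{-1})^{(i)}\cdots(\Phi_E^{-1})^{(1)}$ of Remark~\ref{Rem:Comparision_Anderson}, the series $\mathbf{g}_\xi$ converges in $\Mat_{1\times r}(\TT_\theta)$ on the whole disc $|\xi|_\infty<R_E$ and satisfies $\mathbf{g}_\xi^{(-1)}\Phi_E-\mathbf{g}_\xi=\mathbf{h}_\xi$. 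Using $\Psi_E^{(-1)}=\Phi_E\Psi_E$, equivalently $\Psi_E^{(1)}=(\Phi_E^{(1)})^{-1}\Psi_E$, together with the once-twisted form of this identity, a direct computation gives $\wp_r\big((\mathbf{g}_\xi+\mathbf{h}_\xi)\Psi_E\big)=\xi\mathbf{e}_1\Psi_E$, so that $(\mathbf{g}_\xi+\mathbf{h}_\xi)\Psi_E\in\wp_r^{-1}(\xi\mathbf{e}_1\Psi_E)=\overset{\rightarrow}{\mathscr{L}_0}(\xi\mathbf{e}_1\Psi_E)$. Multiplying by $\Psi_E^{-1}\mathbf{e}_1^\tr$ and invoking Proposition~\ref{Prop:Connection} identifies $\mathcal{L}_E(\xi;t)=(\mathbf{g}_\xi+\mathbf{h}_\xi)\mathbf{e}_1^\tr$ as a representative of $\overset{\rightarrow}{\mathscr{L}_E}(\xi)$. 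Since $\mathcal{L}_E(\xi;t)\in\TT_\theta$ is regular at $t=\theta$ and $\Lambda_E=(\Mat_{1\times r}(\mathbb{F}_q[t])\Psi_E^{-1}\mathbf{e}_1^\tr)\mid_{t=\theta}$ by \eqref{Eq:E_0_Periods}, specialising at $t=\theta$ and applying \eqref{Eq:Specialization} yields $\overset{\rightarrow}{\log_E}(\xi)=\mathcal{L}_E(\xi;t)\mid_{t=\theta}\equiv\log_E(\xi)\pmod{\Lambda_E}$.

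For (2), fix $\xi_0\in\mathbb{C}_\infty$, pick a representative $c_0\in\mathbb{C}_\infty$ of $\overset{\rightarrow}{\log_E}(\xi_0)$, choose $\rho<R_E$ with in addition $\rho<|\xi_0|_\infty$ when $\xi_0\neq 0$, and put $\mathbb{D}=\{\xi:|\xi-\xi_0|_\infty\leq\rho\}$. On $\mathbb{D}$ set $g(\xi):=c_0+\log_E(\xi-\xi_0)$, which converges since $\rho<R_E$. By the $\mathbb{F}_q$-linearity of $\overset{\rightarrow}{\log_E}$ and part (1), $\overset{\rightarrow}{\log_E}(\xi)=\overset{\rightarrow}{\log_E}(\xi_0)+\overset{\rightarrow}{\log_E}(\xi-\xi_0)\equiv g(\xi)\pmod{\Lambda_E}$, so $g$ is an analytic power-series lift of $\overset{\rightarrow}{\log_E}\mid_{\mathbb{D}}$. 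The crux is that a branch must be a \emph{global} $\mathbb{F}_q$-linear lift, whereas $g$ itself is not $\mathbb{F}_q$-linear; I would resolve this with the ultrametric. For $\xi_i=\xi_0+a_i\in\mathbb{D}$ and $c_i\in\mathbb{F}_q$, any relation $\sum_i c_i\xi_i=0$ forces $\sum_i c_i=0$ (otherwise $|\sum_i c_i\xi_i|_\infty=|\xi_0|_\infty>\rho$), whence $\sum_i c_i g(\xi_i)=(\sum_i c_i)c_0+\log_E(\sum_i c_i(\xi_i-\xi_0))=0$ by the $\mathbb{F}_q$-linearity of $\log_E$. Thus $g\mid_{\mathbb{D}}$ extends to a well-defined $\mathbb{F}_q$-linear map on $\mathbb{F}_q\text{-span}(\mathbb{D})$ that still lifts $\overset{\rightarrow}{\log_E}$ there; extending it across an $\mathbb{F}_q$-linear complement by any lift of $\overset{\rightarrow}{\log_E}$—available because $0\to\Lambda_E\to\mathbb{C}_\infty\to\mathbb{C}_\infty/\Lambda_E\to 0$ splits as $\mathbb{F}_q$-vector spaces—produces a global branch $\log_E^\circ$ with $\log_E^\circ\mid_{\mathbb{D}}=g$. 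When $\xi_0=0$ the disc $\mathbb{D}$ is already an $\mathbb{F}_q$-subspace and $g=\log_E$ is genuinely $\mathbb{F}_q$-linear, so this step is immediate.

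I expect the main obstacle to be precisely this last reconciliation in (2): producing a single globally $\mathbb{F}_q$-linear branch whose restriction to a disc is given by a convergent power series. The verification of the difference equation in (1) and the valuation estimates forcing $\sum_i c_i=0$ on $\mathbb{D}$ are routine once the choice $\rho<\min(R_E,|\xi_0|_\infty)$ is in place.
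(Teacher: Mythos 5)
Your proposal is correct in overall structure, and on the two points where the paper's own proof is terse it is actually more careful; but it rests on one convergence claim that you assert without proof and that your argument for (1) genuinely needs. First the comparison. For part (1), the paper identifies $\mathcal{L}_E(\xi;t)$ as a representative of the coset $\overset{\rightarrow}{\mathscr{L}_E}(\xi)$ via the convergent series $\mathscr{L}_0(\xi\mathbf{e}_1\Psi_E)$, which in the paper's setup is only available under the extra hypothesis $||\xi\mathbf{e}_1\Psi_E||<1$; you instead verify the Artin--Schreier relation $\wp_r\bigl((\mathbf{g}_\xi+\mathbf{h}_\xi)\Psi_E\bigr)=\xi\mathbf{e}_1\Psi_E$ directly from $\mathbf{g}_\xi^{(-1)}\Phi_E-\mathbf{g}_\xi=\mathbf{h}_\xi$ and the twisted equation $\Psi_E=\Phi_E^{(1)}\Psi_E^{(1)}$ (the computation $(\mathbf{g}_\xi+\mathbf{h}_\xi)^{(1)}\Psi_E^{(1)}=\mathbf{g}_\xi\Phi_E^{(1)}(\Phi_E^{(1)})^{-1}\Psi_E=\mathbf{g}_\xi\Psi_E$ is correct), so that your identification works on the whole disc $|\xi|_\infty<R_E$ with no norm condition on $\Psi_E$. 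This is Remark~\ref{Rem:Comparision_Anderson} promoted to a proof, and it is a genuine gain in completeness. For part (2), the paper's justification is a sketch; you correctly isolate the crux, namely that a branch is by definition a \emph{global} $\mathbb{F}_q$-linear lift (and need not be continuous), so one must actually construct a branch restricting on $\mathbb{D}$ to $c_0+\log_E(\cdot-\xi_0)$. Your resolution --- the ultrametric observation that $\rho<|\xi_0|_\infty$ forces $\sum_ic_i=0$ in any $\mathbb{F}_q$-relation among points of $\mathbb{D}$, then linear extension across an $\mathbb{F}_q$-complement of $\mathrm{span}_{\mathbb{F}_q}(\mathbb{D})$ using an $\mathbb{F}_q$-linear section of $\mathbb{C}_\infty\to\mathbb{C}_\infty/\Lambda_E$ --- is complete and correct, including the separate treatment of $\xi_0=0$.

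The gap is the assertion that $\mathbf{g}_\xi$ converges in $\Mat_{1\times r}(\TT_\theta)$ for \emph{all} $|\xi|_\infty<R_E$. Nothing in the paper gives you this: Remark~\ref{Rem:Comparision_Anderson} claims it only for sufficiently small $|\xi|_\infty$, and \eqref{Eq:Deformation_of_Log} together with Lemma~\ref{Lem:Formula} controls only the first \emph{column} of $\mathcal{R}_i$, i.e.\ only the first entry $\sum_i\xi^{q^i}[\mathcal{R}_i]_{11}=\sum_i\xi^{q^i}\mathcal{B}_i(t)$ of $\mathbf{g}_\xi$, whereas $\mathbf{g}_\xi=\sum_i\xi^{q^i}\mathbf{e}_1\mathcal{R}_i$ involves the first \emph{row}. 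The claim is true and can be filled in: writing $\mathcal{R}_i=\mathcal{R}_{i-1}^{(1)}(\Phi_E^{-1})^{(1)}$ and using the shape \eqref{Eq:Inverse_Of_Phi_E}, one unrolls to the finite sum $[\mathcal{R}_i]_{1j}=\sum_{s=0}^{\min(r-j,\,i-1)}\mathcal{B}_{i-1-s}^{(1+s)}\,\kappa_{j+s}^{(1-j)}/(t-\theta^{q^{1+s}})$, so each entry of $\mathbf{g}_\xi$ is a finite sum of Frobenius twists of the tails of the convergent series $\sum_n\mathcal{B}_n(t)\xi^{q^n}$ multiplied by fixed elements of $\TT_\theta$; then $||f^{(m)}||_\theta\le||f||_\theta^{q^m}$ and $|\xi|_\infty^{q^i}=\bigl(|\xi|_\infty^{q^{i-1-s}}\bigr)^{q^{1+s}}$ give $||\xi^{q^i}[\mathcal{R}_i]_{1j}||_\theta\le C\max_s\bigl(|\xi|_\infty^{q^{i-1-s}}||\mathcal{B}_{i-1-s}||_\theta\bigr)^{q^{1+s}}\to 0$, because $|\xi|_\infty^{q^n}||\mathcal{B}_n||_\theta\to0$ is exactly what $\mathcal{L}_E(\xi;t)\in\TT_\theta$ means. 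With this lemma added, your proof of (1), and hence of the whole theorem, is complete.
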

    
    \begin{proof}
        The first part follows from our construction together with Proposition~\ref{Prop:Connection} and \eqref{Eq:Specialization}. To see the second part, we note from our construction that $\overset{\rightarrow}{\log_E}$ is additive. Furthermore, we have
        \[
            \overset{\rightarrow}{\log_E}(\xi)\equiv\log_E(\xi)~(\mathrm{mod}~\Lambda_E)
        \]
        by the first part whenever $\xi\in\mathbb{C}_\infty$ with $|\xi|_\infty<R_E$. Then the desired assertion follows from the fact that $\log_E$ is induced by a power series converging on the closed disk of radius smaller than $R_E$ and the period lattice is discrete inside $\mathbb{C}_\infty$.
    \end{proof}
    
    One of the most important features of the Drinfeld logarithm is the compatibility with the action coming from the Drinfeld module, namely, the functional equation
    \[
        \log_E\circ\phi_t=\partial\phi_t\circ\log_E.
    \]
    Our next result asserts that our extended Drinfeld logarithm satisfies the same functional equation.
    
    \begin{theorem}\label{Thm:Functional_Equation_Of_Extended_Logrithm}\label{Thm:Main_Thm_2}
        Let $\xi\in\mathbb{C}_\infty$. Then, we have
        \[
            \overset{\rightarrow}{\log_E}(\phi_t(\xi))\equiv\theta\overset{\rightarrow}{\log_E}(\xi) ~(\mathrm{mod}~\Lambda_E).
        \]
    \end{theorem}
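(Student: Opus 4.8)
The plan is to reduce the functional equation to an identity that can be attacked with Anderson's exponentiation theorem (quoted in Remark~\ref{Rem:Comparision_Anderson}), using the $\mathbb{F}_q[t]$-linearity of $\overset{\rightarrow}{\mathscr{L}_E}$ together with the Frobenius difference equation $\Psi_E^{(-1)}=\Phi_E\Psi_E$ and the explicit shape of $\Phi_E^{-1}$ from \eqref{Eq:Inverse_Of_Phi_E}. First I would record that $\phi_t(\xi)=\theta\xi+\kappa_1\xi^{(1)}+\cdots+\kappa_r\xi^{(r)}$, so that by additivity of $\overset{\rightarrow}{\log_E}$ the claim is equivalent to
\[
    \theta\overset{\rightarrow}{\log_E}(\xi)-\overset{\rightarrow}{\log_E}(\theta\xi)\equiv\overset{\rightarrow}{\log_E}(\kappa_1\xi^{(1)}+\cdots+\kappa_r\xi^{(r)})~(\mathrm{mod}~\Lambda_E).
\]
Working at the level of deformation series, the $\mathbb{F}_q[t]$-linearity of $\overset{\rightarrow}{\mathscr{L}_E}$ gives $t\overset{\rightarrow}{\mathscr{L}_E}(Z)-\overset{\rightarrow}{\mathscr{L}_E}(\theta Z)=\overset{\rightarrow}{\mathscr{L}_E}((t-\theta)Z)$, so the left-hand side above is the value at $t=\theta$ of $\overset{\rightarrow}{\mathscr{L}_E}((t-\theta)\xi)$.

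The heart of the computation is to simplify $\overset{\rightarrow}{\mathscr{L}_E}((t-\theta)Z)=\overset{\rightarrow}{\mathscr{L}_0}((t-\theta)Z\mathbf{e}_1\Psi_E)\Psi_E^{-1}\mathbf{e}_1^\tr$. Using $\Psi_E=\Phi_E^{-1}\Psi_E^{(-1)}$ and the fact that the leading factor $(t-\theta)$ cancels the denominator of $\Phi_E^{-1}$, I would verify the key identity
\[
    ((t-\theta)Z,0,\dots,0)\Psi_E=\left((\kappa_1Z^{(1)},\dots,\kappa_r^{(1-r)}Z^{(1)})\Psi_E\right)^{(-1)},
\]
which exhibits the argument as a single Frobenius twist. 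Then the twisting compatibility $\overset{\rightarrow}{\mathscr{L}_0}(W^{(-1)})=\overset{\rightarrow}{\mathscr{L}_0}(W)^{(-1)}$ (legitimate because $\ker\wp_r=\Mat_{1\times r}(\mathbb{F}_q[t])$ is twist-stable), combined with the defining relation $\wp_r(\overset{\rightarrow}{\mathscr{L}_0}(W))=W$, splits off a copy of $((t-\theta)Z,0,\dots,0)\Psi_E$. After right-multiplying by $\Psi_E^{-1}\mathbf{e}_1^\tr$ this split-off term becomes $(t-\theta)Z$, which vanishes at $t=\theta$; hence evaluating at $t=\theta$ with $Z=\xi$ yields
\[
    \theta\overset{\rightarrow}{\log_E}(\xi)-\overset{\rightarrow}{\log_E}(\theta\xi)\equiv\mathscr{L}_0^\circ\left((\kappa_1\xi^{(1)},\dots,\kappa_r^{(1-r)}\xi^{(1)})\Psi_E\right)\Psi_E^{-1}\mathbf{e}_1^\tr\big|_{t=\theta}~(\mathrm{mod}~\Lambda_E).
\]

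It remains to identify the right-hand side, call it $\eta$, with $\overset{\rightarrow}{\log_E}(\kappa_1\xi^{(1)}+\cdots+\kappa_r\xi^{(r)})$ modulo $\Lambda_E$, and this is the step I expect to be the main obstacle: $\eta$ is built from the vector $\mathbf{w}=(\kappa_1\xi^{(1)},\dots,\kappa_r^{(1-r)}\xi^{(1)})$ rather than from $(\kappa_1\xi^{(1)}+\cdots+\kappa_r\xi^{(r)})\mathbf{e}_1$, so the two cannot be matched termwise and one must instead argue through the exponential. The plan is to set $\mathbf{g}=\mathscr{L}_0^\circ(\mathbf{w}\Psi_E)\Psi_E^{-1}-\mathbf{w}$ and, using $\Psi_E=\Phi_E^{-1}\Psi_E^{(-1)}$ and $\wp_r(\mathscr{L}_0^\circ(\mathbf{w}\Psi_E))=\mathbf{w}\Psi_E$, to check the Frobenius equation $\mathbf{g}^{(-1)}\Phi_E-\mathbf{g}=\mathbf{w}$; one also confirms $\mathbf{g}\in\Mat_{1\times r}(\TT_\theta)$ via Lemma~\ref{Lem:Key_Lemma}(4) since $\Psi_E\in\Mat_r(\mathbb{E})\subset\Mat_r(\TT_\theta)$. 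Anderson's exponentiation theorem then gives $\exp_E(\mathscr{E}_0(\mathbf{g}+\mathbf{w}))=\mathscr{E}_1(\mathbf{w})$, where a direct twist calculation shows $\mathscr{E}_1(\mathbf{w})=\epsilon_1(\iota(\mathbf{w}))=\kappa_1\xi^{(1)}+\cdots+\kappa_r\xi^{(r)}$ and $\mathscr{E}_0(\mathbf{g}+\mathbf{w})=\eta$. Applying the very same argument with $\mathbf{h}=(\kappa_1\xi^{(1)}+\cdots+\kappa_r\xi^{(r)})\mathbf{e}_1$ (exactly as in Remark~\ref{Rem:Comparision_Anderson}) shows that any branch value $\log_E^\circ(\kappa_1\xi^{(1)}+\cdots+\kappa_r\xi^{(r)})$ is also a preimage under $\exp_E$ of $\kappa_1\xi^{(1)}+\cdots+\kappa_r\xi^{(r)}$; since $\ker\exp_E=\Lambda_E$, the two preimages $\eta$ and $\log_E^\circ(\kappa_1\xi^{(1)}+\cdots+\kappa_r\xi^{(r)})$ agree modulo $\Lambda_E$. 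Feeding this into the displayed reduction and invoking additivity of $\overset{\rightarrow}{\log_E}$ completes the proof.
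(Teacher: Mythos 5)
Your proof is correct, and its first half coincides with the paper's own argument: the decomposition $t=\theta+(t-\theta)$, the twist identity $(t-\theta)Z\mathbf{e}_1\Psi_E=\bigl((\kappa_1Z^{(1)},\dots,\kappa_r^{(1-r)}Z^{(1)})\Psi_E\bigr)^{(-1)}$, and the splitting-off of $(t-\theta)Z\mathbf{e}_1\Psi_E$ via the $\wp_r$-relation are exactly the opening moves of Lemma~\ref{Lem:Lem:Formula_For_T_Deformation_Logarithm}. Where you genuinely diverge is in handling the leftover term $\overset{\rightarrow}{\mathscr{L}_0}\bigl((\kappa_1\xi^{(1)},\dots,\kappa_r^{(1-r)}\xi^{(1)})\Psi_E\bigr)\Psi_E^{-1}\mathbf{e}_1^\tr$. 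The paper treats it purely algebraically and before specialization: Lemma~\ref{Lem:Preliminary_Calculations} untwists each $Z\mathbf{e}_j\Psi_E$ down to $Z^{(j-1)}\mathbf{e}_1\Psi_E$ plus terms along $\mathbf{e}_2\Psi_E,\dots,\mathbf{e}_j\Psi_E$, these extra terms are annihilated upon right multiplication by $\Psi_E^{-1}\mathbf{e}_1^\tr$ because $\mathbf{e}_{s+1}\mathbf{e}_1^\tr=0$, and additivity reassembles $\theta Z+\kappa_1Z^{(1)}+\cdots+\kappa_rZ^{(r)}$ into $\phi_t(Z)$; this yields a functional equation valid in $\mathbb{T}$ for arbitrary $Z$ (an extension of \cite[Thm.~6.13(e)]{EGP14}), which is only then evaluated at $t=\theta$. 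You instead work after specialization and invoke Anderson's exponentiation theorem twice: once to show the leftover term is an $\exp_E$-preimage of $\kappa_1\xi^{(1)}+\cdots+\kappa_r\xi^{(r)}$, and once more, with $\mathbf{h}=(\kappa_1\xi^{(1)}+\cdots+\kappa_r\xi^{(r)})\mathbf{e}_1$, to show any branch value $\log_E^\circ(\kappa_1\xi^{(1)}+\cdots+\kappa_r\xi^{(r)})$ is also such a preimage, concluding via $\ker\exp_E=\Lambda_E$. That second application is essential and you were right to include it: without it the argument would be circular, since the statement that $\overset{\rightarrow}{\log_E}$ inverts $\exp_E$ is Corollary~\ref{Thm:Main_Thm_3}, which the paper deduces \emph{from} this theorem. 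Your verifications check out: the Frobenius equation $\mathbf{g}^{(-1)}\Phi_E-\mathbf{g}=\mathbf{w}$ follows from $(\mathbf{w}\Psi_E)^{(-1)}\Psi_E^{-1}=\mathbf{w}^{(-1)}\Phi_E$, and $\mathbf{g}\in\Mat_{1\times r}(\TT_\theta)$ follows from Lemma~\ref{Lem:Key_Lemma}(4) — though note the fact you actually need there is $\Psi_E^{-1}\in\Mat_r(\TT_\theta)$, i.e.\ $\Psi_E\in\GL_r(\TT_\theta)$ as established in Section~2, not merely $\Psi_E\in\Mat_r(\TT_\theta)$. The trade-off between the two routes: the paper's stays self-contained within the $\wp_r$-calculus (Anderson's exponentiation theorem appears there only as a quoted aside in Remark~\ref{Rem:Comparision_Anderson}) and proves the stronger deformation-level identity, while yours is shorter and more conceptual but imports that theorem as a black box and yields only the specialized statement.
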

    
    To prove Theorem~\ref{Thm:Functional_Equation_Of_Extended_Logrithm}, we need some preliminary calculations. For $(Z_1,\dots,Z_r)\in\Mat_{1\times r}(\mathbb{T})$, we recall that we have
    \begin{align*}
        \wp_r((Z_1,\dots,Z_r)^{(1)})&=(Z_1,\dots,Z_r)^{(1)}-(Z_1,\dots,Z_r)^{(2)}\\
        &=\left((Z_1,\dots,Z_r)-(Z_1,\dots,Z_r)^{(1)}\right)^{(1)}\\
        &=\wp_r((Z_1,\dots,Z_r))^{(1)}.
    \end{align*}
    Then, $\wp_r(\overset{\rightarrow}{\mathscr{L}_0}(Z_1,\dots,Z_r))=(Z_1,\dots,Z_r)$ implies that
    \begin{equation}\label{Eq:Property_Of_L0}
        \overset{\rightarrow}{\mathscr{L}_0}((Z_1,\dots,Z_r)^{(1)})=\overset{\rightarrow}{\mathscr{L}_0}((Z_1,\dots,Z_r))^{(1)}.
    \end{equation}
    
    \begin{lemma}\label{Lem:Preliminary_Calculations}
        Let $Z\in\mathbb{T}$. Then, for $2\leq j\leq r$, we have
        \[
            \overset{\rightarrow}{\mathscr{L}_0}\left(Z\mathbf{e}_j\Psi_E\right)\equiv\overset{\rightarrow}{\mathscr{L}_0}\left(Z^{(j-1)}\mathbf{e}_{1}\Psi_E\right)+Z^{(j-2)}\mathbf{e}_2\Psi_E+\cdots+Z\mathbf{e}_j\Psi_E~(\mathrm{mod}~\Mat_{1\times r}(\mathbb{F}_q[t])).
        \]
    \end{lemma}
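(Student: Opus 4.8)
The plan is to reduce the statement to two structural facts: that the rows of $\Psi_E$ are successive Frobenius twists of one another, and that $\overset{\rightarrow}{\mathscr{L}_0}$ intertwines twisting with a telescoping correction. First I would extract from the difference equation $\Psi_E^{(-1)}=\Phi_E\Psi_E$ the identity
\[
    \mathbf{e}_j\Psi_E=(\mathbf{e}_1\Psi_E)^{(1-j)},\qquad 1\leq j\leq r.
\]
Indeed, the companion-type shape of $\Phi_E$ in \eqref{Eq:T_Frame_Of_Drinfeld_Modules} gives $\mathbf{e}_i\Phi_E=\mathbf{e}_{i+1}$ for $1\leq i\leq r-1$, so reading off the $i$-th row of $\Psi_E^{(-1)}=\Phi_E\Psi_E$ yields $(\mathbf{e}_i\Psi_E)^{(-1)}=\mathbf{e}_{i+1}\Psi_E$; iterating this from $i=1$ produces the displayed formula.

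Next I would record the behaviour of $\overset{\rightarrow}{\mathscr{L}_0}$ under negative twists. Combining the defining relation $\wp_r(\overset{\rightarrow}{\mathscr{L}_0}(\mathbf{v}))=\mathbf{v}$ with \eqref{Eq:Property_Of_L0} gives, modulo $\Mat_{1\times r}(\mathbb{F}_q[t])$, the identity $\overset{\rightarrow}{\mathscr{L}_0}(\mathbf{v})-\overset{\rightarrow}{\mathscr{L}_0}(\mathbf{v}^{(1)})\equiv\mathbf{v}$, whence, after replacing $\mathbf{v}$ by $\mathbf{v}^{(-1)}$,
\[
    \overset{\rightarrow}{\mathscr{L}_0}(\mathbf{v}^{(-1)})\equiv\overset{\rightarrow}{\mathscr{L}_0}(\mathbf{v})+\mathbf{v}^{(-1)}\quad(\mathrm{mod}~\Mat_{1\times r}(\mathbb{F}_q[t])).
\]
A straightforward induction then yields the telescoped identity
\[
    \overset{\rightarrow}{\mathscr{L}_0}(\mathbf{v}^{(-n)})\equiv\overset{\rightarrow}{\mathscr{L}_0}(\mathbf{v})+\sum_{\ell=1}^{n}\mathbf{v}^{(-\ell)}\quad(\mathrm{mod}~\Mat_{1\times r}(\mathbb{F}_q[t])).
\]

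Finally I would combine the two ingredients. Setting $\mathbf{v}:=Z^{(j-1)}\mathbf{e}_1\Psi_E$, the first step gives $\mathbf{v}^{(-(j-1))}=Z\,\mathbf{e}_j\Psi_E$, so applying the telescoped identity with $n=j-1$ gives
\[
    \overset{\rightarrow}{\mathscr{L}_0}(Z\mathbf{e}_j\Psi_E)\equiv\overset{\rightarrow}{\mathscr{L}_0}(Z^{(j-1)}\mathbf{e}_1\Psi_E)+\sum_{\ell=1}^{j-1}\bigl(Z^{(j-1)}\mathbf{e}_1\Psi_E\bigr)^{(-\ell)}.
\]
It then remains to identify the summands: since $\bigl(Z^{(j-1)}\mathbf{e}_1\Psi_E\bigr)^{(-\ell)}=Z^{(j-1-\ell)}(\mathbf{e}_1\Psi_E)^{(-\ell)}=Z^{(j-1-\ell)}\mathbf{e}_{\ell+1}\Psi_E$, where the last equality is again the first step and is valid since $\ell+1\leq j\leq r$, the sum over $\ell=1,\dots,j-1$ is precisely $Z^{(j-2)}\mathbf{e}_2\Psi_E+\cdots+Z\mathbf{e}_j\Psi_E$, which is the asserted congruence. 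I expect the only genuinely delicate point to be the bookkeeping in the first step, namely recognising that the shift structure of $\Phi_E$ forces the $j$-th row of $\Psi_E$ to be the $(1-j)$-fold twist of the first row; once this is established, the remainder is the formal telescoping above.
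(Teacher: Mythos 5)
Your proof is correct and takes essentially the same route as the paper's: both rest on the companion structure of $\Phi_E$ forcing the rows of $\Psi_E$ to be successive Frobenius twists of one another, combined with the $\wp_r$-derived congruence $\overset{\rightarrow}{\mathscr{L}_0}(\mathbf{v}^{(-1)})\equiv\overset{\rightarrow}{\mathscr{L}_0}(\mathbf{v})+\mathbf{v}^{(-1)}~(\mathrm{mod}~\Mat_{1\times r}(\mathbb{F}_q[t]))$, followed by iteration. The only cosmetic difference is that you read the row relation $\mathbf{e}_j\Psi_E=(\mathbf{e}_1\Psi_E)^{(1-j)}$ off $\Psi_E^{(-1)}=\Phi_E\Psi_E$ directly and package the induction as a general telescoping identity, whereas the paper uses the explicit inverse $\Phi_E^{-1}$ to state a one-step recursion and then iterates it inline.
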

    
    \begin{proof}
        Since $\Psi_E=\Phi_E^{-1}\Psi_E^{(-1)}$, we have
        \begin{equation}\label{Eq:Recursive_Formula}
            Z\mathbf{e}_j\Psi_E=Z\mathbf{e}_j\Phi_E^{-1}\Psi_E^{(-1)}=\left(Z^{(1)}\mathbf{e}_{j-1}\Psi_E\right)^{(-1)}.
        \end{equation}
        Here the second equality uses the fact that $\mathbf{e}_j\Phi_E^{-1}=\mathbf{e}_{j-1}^{(-1)}$ which is a consequence of \eqref{Eq:Inverse_Of_Phi_E}.
        On the one hand, we have
        \[
            \wp_r\left(\overset{\rightarrow}{\mathscr{L}_0}(Z\mathbf{e}_j\Psi_E)\right)=Z\mathbf{e}_j\Psi_E.
        \]
        On the other hand, by \eqref{Eq:Property_Of_L0} and \eqref{Eq:Recursive_Formula} we have
        \begin{align*}
            \wp_r\left(\overset{\rightarrow}{\mathscr{L}_0}(Z\mathbf{e}_j\Psi_E)\right)=\overset{\rightarrow}{\mathscr{L}_0}(Z\mathbf{e}_j\Psi_E)-\overset{\rightarrow}{\mathscr{L}_0}(Z^{(1)}\mathbf{e}_{j-1}\Psi_E).
        \end{align*}
        In particular, we deduce that
        \[
            \overset{\rightarrow}{\mathscr{L}_0}(Z\mathbf{e}_j\Psi_E)\equiv\overset{\rightarrow}{\mathscr{L}_0}(Z^{(1)}\mathbf{e}_{j-1}\Psi_E)+Z\mathbf{e}_j\Psi_E~(\mathrm{mod}~\Mat_{1\times r}(\mathbb{F}_q[t])).
        \]
        Finally, we apply the above equality inductively and eventually obtain
        \[
            \overset{\rightarrow}{\mathscr{L}_0}\left(Z\mathbf{e}_j\Psi_E\right)\equiv\overset{\rightarrow}{\mathscr{L}_0}\left(Z^{(j-1)}\mathbf{e}_{1}\Psi_E\right)+Z^{(j-2)}\mathbf{e}_2\Psi_E+\cdots+Z\mathbf{e}_j\Psi_E~(\mathrm{mod}~\Mat_{1\times r}(\mathbb{F}_q[t])).
        \]
    \end{proof}
    
    The next lemma can be seen as an extended version of \cite[Thm.~6.13(e)]{EGP14}.
    
    \begin{lemma}\label{Lem:Lem:Formula_For_T_Deformation_Logarithm}
        Let $Z\in\mathbb{T}$. Then, we have
        \begin{align*}
            t\overset{\rightarrow}{\mathscr{L}_0}\left(Z\mathbf{e}_1\Psi_E\right)&\equiv\overset{\rightarrow}{\mathscr{L}_0}\left(\phi_t(Z)\mathbf{e}_1\Psi_E\right)+(t-\theta)Z\mathbf{e}_1\Psi_E\\
            &+\sum_{j=2}^r\sum_{s=1}^{j-1}\kappa_j^{(-s)}Z^{(j-s)}\mathbf{e}_{s+1}\Psi_E~(\mathrm{mod}~\Mat_{1\times r}(\mathbb{F}_q[t])).
        \end{align*}
        Here, for $\sum_{i=0}^sa_i\tau^i\in\overline{k}[\tau]$ we set $\sum_{i=0}^sa_i\tau^i\left(Z\right):=\sum_{i=0}^sa_iZ^{(i)}\in\mathbb{T}$.
    \end{lemma}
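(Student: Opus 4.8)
The plan is to reduce the identity to the defining relation $\wp_r(\overset{\rightarrow}{\mathscr{L}_0}(W))=W$ together with the $\mathbb{F}_q[t]$-linearity of $\overset{\rightarrow}{\mathscr{L}_0}$, and to push every Frobenius twist \emph{outside} of $\overset{\rightarrow}{\mathscr{L}_0}$ by means of \eqref{Eq:Property_Of_L0}, so that the only surviving $\overset{\rightarrow}{\mathscr{L}_0}$-term is $\overset{\rightarrow}{\mathscr{L}_0}(\phi_t(Z)\mathbf{e}_1\Psi_E)$ and everything else collapses to explicit vectors in $\Mat_{1\times r}(\mathbb{T})$. First, since $t\in\mathbb{F}_q[t]$, I rewrite the left-hand side as $\overset{\rightarrow}{\mathscr{L}_0}(tZ\mathbf{e}_1\Psi_E)$ and split $t\mathbf{e}_1=\theta\mathbf{e}_1+(t-\theta)\mathbf{e}_1$. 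The algebraic input is read off from the last row of $\Phi_E$ in \eqref{Eq:T_Frame_Of_Drinfeld_Modules}, namely $\kappa_r^{(-r)}\mathbf{e}_r\Phi_E=(t-\theta)\mathbf{e}_1-\sum_{j=1}^{r-1}\kappa_j^{(-j)}\mathbf{e}_{j+1}$. Multiplying on the right by $\Psi_E$, using $\Phi_E\Psi_E=\Psi_E^{(-1)}$ and the recursion $\mathbf{e}_{j+1}\Psi_E=(\mathbf{e}_1\Psi_E)^{(-j)}$ (which is \eqref{Eq:Recursive_Formula} with $Z=1$), I obtain the key expansion
\[
    (t-\theta)Z\mathbf{e}_1\Psi_E=\sum_{j=1}^{r-1}\kappa_j^{(-j)}Z\mathbf{e}_{j+1}\Psi_E+\kappa_r^{(-r)}Z(\mathbf{e}_1\Psi_E)^{(-r)}.
\]

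Applying $\overset{\rightarrow}{\mathscr{L}_0}$ to $tZ\mathbf{e}_1\Psi_E$ and splitting additively, I treat the three resulting groups of terms as follows. In the $\theta\mathbf{e}_1$-part I substitute $\theta Z=\phi_t(Z)-\sum_{j=1}^r\kappa_j Z^{(j)}$. To each $\overset{\rightarrow}{\mathscr{L}_0}(\kappa_j^{(-j)}Z\mathbf{e}_{j+1}\Psi_E)$ with $1\le j\le r-1$ I apply Lemma~\ref{Lem:Preliminary_Calculations} with scalar $\kappa_j^{(-j)}Z$ and index $j+1$; since $(\kappa_j^{(-j)}Z)^{(j)}=\kappa_j Z^{(j)}$, this produces $\overset{\rightarrow}{\mathscr{L}_0}(\kappa_j Z^{(j)}\mathbf{e}_1\Psi_E)$ plus the explicit tail $\sum_{s=1}^{j}\kappa_j^{(-s)}Z^{(j-s)}\mathbf{e}_{s+1}\Psi_E$. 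Finally, rewriting $\kappa_r^{(-r)}Z(\mathbf{e}_1\Psi_E)^{(-r)}=(\kappa_r Z^{(r)}\mathbf{e}_1\Psi_E)^{(-r)}$, I apply \eqref{Eq:Property_Of_L0} in its iterated form $\overset{\rightarrow}{\mathscr{L}_0}(W)^{(-r)}\equiv\overset{\rightarrow}{\mathscr{L}_0}(W)+\sum_{i=1}^{r}W^{(-i)}$ with $W=\kappa_r Z^{(r)}\mathbf{e}_1\Psi_E$.

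The last step is pure bookkeeping, and this is where I expect the only real difficulty. All terms of the form $\overset{\rightarrow}{\mathscr{L}_0}(\kappa_j Z^{(j)}\mathbf{e}_1\Psi_E)$ cancel: the $-\sum_{j=1}^r$ coming from the $\theta$-substitution annihilates the $+\sum_{j=1}^{r-1}$ from the Lemma~\ref{Lem:Preliminary_Calculations} applications together with the single $j=r$ term released by the twist formula, leaving exactly $\overset{\rightarrow}{\mathscr{L}_0}(\phi_t(Z)\mathbf{e}_1\Psi_E)$. Among the explicit vectors, the ``diagonal'' contributions $\sum_{j=1}^{r-1}\kappa_j^{(-j)}Z\mathbf{e}_{j+1}\Psi_E$ (the $s=j$ part of the Lemma tails) cancel against the corresponding piece of $\kappa_r^{(-r)}Z(\mathbf{e}_1\Psi_E)^{(-r)}$, which I re-expand through the key expansion above; this same re-expansion simultaneously releases the term $(t-\theta)Z\mathbf{e}_1\Psi_E$. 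Re-indexing the surviving double sum—the off-diagonal Lemma tails together with the $i<r$ terms $\kappa_r^{(-i)}Z^{(r-i)}\mathbf{e}_{i+1}\Psi_E$ coming from the $j=r$ twist formula, rewritten via $(\mathbf{e}_1\Psi_E)^{(-i)}=\mathbf{e}_{i+1}\Psi_E$—recognizes it precisely as $\sum_{j=2}^r\sum_{s=1}^{j-1}\kappa_j^{(-s)}Z^{(j-s)}\mathbf{e}_{s+1}\Psi_E$. Throughout, every manipulation is valid modulo $\Mat_{1\times r}(\mathbb{F}_q[t])$, since Frobenius twisting preserves this submodule, and collecting the pieces yields the asserted congruence.
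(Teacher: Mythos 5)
Your proposal is correct, but it organizes the computation differently from the paper. Both arguments start from the split $t=\theta+(t-\theta)$, use the difference equation $\Psi_E=\Phi_E^{-1}\Psi_E^{(-1)}$, invoke Lemma~\ref{Lem:Preliminary_Calculations}, and assemble $\phi_t(Z)$ at the end; the difference is in how $(t-\theta)Z\mathbf{e}_1\Psi_E$ is decomposed. The paper reads off the \emph{first row of $\Phi_E^{-1}$}, which packages the whole decomposition as a single backward twist,
\[
(t-\theta)Z\mathbf{e}_1\Psi_E=\left((\kappa_1Z^{(1)},\dots,\kappa_r^{(1-r)}Z^{(1)})\Psi_E\right)^{(-1)},
\]
its \eqref{Eq:Eq_2}; one application of the release identity $\overset{\rightarrow}{\mathscr{L}_0}(V^{(-1)})\equiv V^{(-1)}+\overset{\rightarrow}{\mathscr{L}_0}(V)$ then produces the explicit term $(t-\theta)Z\mathbf{e}_1\Psi_E$ in one stroke, and Lemma~\ref{Lem:Preliminary_Calculations} applied to the $r$ components of $\overset{\rightarrow}{\mathscr{L}_0}(V)$ finishes the proof with no cancellations. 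You instead use the \emph{last row of $\Phi_E$}, i.e.\ the fully untwisted form of the same identity (expanding the paper's \eqref{Eq:Eq_2} entrywise via $(\mathbf{e}_j\Psi_E)^{(-1)}=\mathbf{e}_{j+1}\Psi_E$ for $j<r$ recovers your key expansion). This makes your $r-1$ applications of Lemma~\ref{Lem:Preliminary_Calculations} slightly more direct, but it isolates the $r$-fold twisted term $\kappa_r^{(-r)}Z(\mathbf{e}_1\Psi_E)^{(-r)}$, which you must handle by the iterated release formula and then fold back through the key expansion, forcing the diagonal/off-diagonal cancellation bookkeeping; I verified that this bookkeeping does close up correctly. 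So the paper's route buys economy — the target terms appear directly and nothing has to cancel — while yours makes explicit which contributions survive and why. One minor attribution point: your ``iterated form'' $\overset{\rightarrow}{\mathscr{L}_0}(W)^{(-r)}\equiv\overset{\rightarrow}{\mathscr{L}_0}(W)+\sum_{i=1}^{r}W^{(-i)}$ is not a consequence of \eqref{Eq:Property_Of_L0} alone; it combines \eqref{Eq:Property_Of_L0} with the defining relation $\wp_r(\overset{\rightarrow}{\mathscr{L}_0}(W))=W$, iterated $r$ times — exactly the mechanism behind Lemma~\ref{Lem:Preliminary_Calculations} — though the formula itself is correct.
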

    
    \begin{proof}
        Since $t=\theta+(t-\theta)$, we have
        \begin{equation}\label{Eq:First_Eq}
            t\overset{\rightarrow}{\mathscr{L}_0}\left(Z\mathbf{e}_1\Psi_E\right)=\overset{\rightarrow}{\mathscr{L}_0}\left(tZ\mathbf{e}_1\Psi_E\right)=\overset{\rightarrow}{\mathscr{L}_0}\left(\theta Z\mathbf{e}_1\Psi_E\right)+\overset{\rightarrow}{\mathscr{L}_0}\left((t-\theta)Z\mathbf{e}_1\Psi_E\right).
        \end{equation}
        Since $\Psi_E=\Phi_E^{-1}\Psi_E^{(-1)}$, \eqref{Eq:Inverse_Of_Phi_E} implies that
        \begin{equation}\label{Eq:Eq_2}
            (t-\theta)Z\mathbf{e}_1\Psi_E=\left((\kappa_1Z^{(1)},\dots,\kappa_r^{(1-r)}Z^{(1)})\Psi_E\right)^{(-1)}.
        \end{equation}
        On the one hand, we have
        \[
            \wp_r\left(\overset{\rightarrow}{\mathscr{L}_0}((t-\theta)Z\mathbf{e}_1\Psi_E)\right)=(t-\theta)Z\mathbf{e}_1\Psi_E.
        \]
        On the other hand, by \eqref{Eq:Property_Of_L0} and \eqref{Eq:Eq_2} we have
        \[
            \wp_r\left(\overset{\rightarrow}{\mathscr{L}_0}((t-\theta)Z\mathbf{e}_1\Psi_E)\right)=\overset{\rightarrow}{\mathscr{L}_0}((t-\theta)Z\mathbf{e}_1\Psi_E)-\overset{\rightarrow}{\mathscr{L}_0}\left((\kappa_1Z^{(1)},\dots,\kappa_r^{(1-r)}Z^{(1)})\Psi_E\right).
        \]
        In particular, we deduce that
        \begin{align*}
            \overset{\rightarrow}{\mathscr{L}_0}((t-\theta)Z\mathbf{e}_1\Psi_E)\equiv(t-\theta)Z\mathbf{e}_1\Psi_E+\sum_{j=1}^r\overset{\rightarrow}{\mathscr{L}_0}\left(\kappa_j^{(1-j)}Z^{(1)}\mathbf{e}_j\Psi_E\right)~(\mathrm{mod}~\Mat_{1\times r}(\mathbb{F}_q[t])).
        \end{align*}
        For $2\leq j\leq r$, we apply Lemma~\ref{Lem:Preliminary_Calculations} to $\overset{\rightarrow}{\mathscr{L}_0}\left(\kappa_j^{(1-j)}Z^{(1)}\mathbf{e}_j\Psi_E\right)$. Then, we obtain
        \[
            \overset{\rightarrow}{\mathscr{L}_0}\left(\kappa_j^{(1-j)}Z^{(1)}\mathbf{e}_j\Psi_E\right)\equiv\overset{\rightarrow}{\mathscr{L}_0}\left(\kappa_jZ^{(j)}\mathbf{e}_{1}\Psi_E\right)+\sum_{s=1}^{j-1}\kappa_j^{(-s)}Z^{(j-s)}\mathbf{e}_{s+1}\Psi_E~(\mathrm{mod}~\Mat_{1\times r}(\mathbb{F}_q[t])).
        \]
        Then, \eqref{Eq:First_Eq} becomes
        \begin{align*}
            t\overset{\rightarrow}{\mathscr{L}_0}\left(Z\mathbf{e}_1\Psi_E\right)&\equiv\overset{\rightarrow}{\mathscr{L}_0}\left(\phi_t(Z)\mathbf{e}_1\Psi_E\right)+(t-\theta)Z\mathbf{e}_1\Psi_E\\
            &+\sum_{j=2}^r\sum_{s=1}^{j-1}\kappa_j^{(-s)}Z^{(j-s)}\mathbf{e}_{s+1}\Psi_E~(\mathrm{mod}~\Mat_{1\times r}(\mathbb{F}_q[t])).
        \end{align*}
        as desired.
    \end{proof}
    {\textbf{Proof of Theorem~\ref{Thm:Functional_Equation_Of_Extended_Logrithm}}:~}
    For $Z\in\mathbb{T}$, we have
    \[
        t\overset{\rightarrow}{\mathscr{L}_E}(Z)=t\overset{\rightarrow}{\mathscr{L}_0}\left(Z\mathbf{e}_1\Psi_E\right)\Psi_E^{-1}\mathbf{e}_1^\mathrm{tr}.
    \]
    By Lemma~\ref{Lem:Lem:Formula_For_T_Deformation_Logarithm}, the equality above becomes
    \[
        t\overset{\rightarrow}{\mathscr{L}_E}(Z)\equiv\overset{\rightarrow}{\mathscr{L}_0}\left(\phi_t(Z)\mathbf{e}_1\Psi_E\right)\Psi_E^{-1}\mathbf{e}_1^\mathrm{tr}+(t-\theta)Z\mathbf{e}_1^\mathrm{tr}~(\mathrm{mod}~\Mat_{1\times r}(\mathbb{F}_q[t])\Psi_E^{-1}).
    \]
    Here, we use the fact that $\mathbf{e}_{j}\mathbf{e}_1^\mathrm{tr}=0$ for each $2\leq j\leq r$.
    Now we consider $\xi\in\mathbb{C}_\infty$. Then, we have
    \begin{align*}
        \theta\overset{\rightarrow}{\log_E}(\xi)&\equiv t\overset{\rightarrow}{\mathscr{L}_E}(\xi)\mid_{t=\theta}~(\mathrm{mod}~\Lambda_E)\\
        &\equiv\left(\overset{\rightarrow}{\mathscr{L}_0}\left(\phi_t(\xi)\mathbf{e}_1\Psi_E\right)\Psi_E^{-1}\mathbf{e}_1^\mathrm{tr}+(t-\theta)\xi\right)\mid_{t=\theta}~(\mathrm{mod}~\Lambda_E)\\
        &\equiv\overset{\rightarrow}{\log_E}(\phi_t(\xi)) ~(\mathrm{mod}~\Lambda_E).
    \end{align*}
    as desired.\qed
        
    \begin{corollary}\label{Thm:Main_Thm_3}
        We denote by $\overset{\rightarrow}{\exp_E}:\mathbb{C}_\infty/\Lambda_E\to\mathbb{C}_\infty$ the induced map from the Drinfeld exponential function $\exp_E:\mathbb{C}_\infty\to\mathbb{C}_\infty$. Then, $\overset{\rightarrow}{\log_E}:\mathbb{C}_\infty\to\mathbb{C}_\infty/\Lambda_E$ is the inverse of $\overset{\rightarrow}{\exp_E}$.
    \end{corollary}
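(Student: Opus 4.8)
The plan is to exploit that $\overset{\rightarrow}{\exp_E}$ is already known to be a bijection: since $\exp_E\colon\mathbb{C}_\infty\to\mathbb{C}_\infty$ is surjective with kernel exactly $\Lambda_E$, the induced map $\overset{\rightarrow}{\exp_E}\colon\mathbb{C}_\infty/\Lambda_E\to\mathbb{C}_\infty$ is a bijection of $\mathbb{F}_q[t]$-modules. Consequently it suffices to produce a one-sided inverse: once we show $\overset{\rightarrow}{\log_E}\circ\overset{\rightarrow}{\exp_E}=\mathrm{id}_{\mathbb{C}_\infty/\Lambda_E}$, the invertibility of $\overset{\rightarrow}{\exp_E}$ forces $\overset{\rightarrow}{\log_E}$ to be its (two-sided) inverse. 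Unwinding the definition of $\overset{\rightarrow}{\exp_E}$, the whole corollary reduces to the single congruence
\[
    \overset{\rightarrow}{\log_E}(\exp_E(\xi))\equiv\xi\quad(\mathrm{mod}~\Lambda_E)\qquad\text{for every }\xi\in\mathbb{C}_\infty.
\]

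First I would establish this congruence in the small region where the two previous theorems already give full control. By Theorem~\ref{Thm:Main_Thm_1}(1), whenever $|\eta|_\infty<R_E$ we have $\overset{\rightarrow}{\log_E}(\eta)\equiv\log_E(\eta)~(\mathrm{mod}~\Lambda_E)$, and since $\log_E$ is the formal inverse of $\exp_E$ the honest identity $\log_E(\exp_E(\zeta))=\zeta$ holds for $\zeta$ in a sufficiently small disk about $0$. The obstruction is that a general $\xi$ need not lie in such a disk. To circumvent this I would use the scaling device underlying Furusho's method: set $\xi_n:=\xi/\theta^n$ for $n\in\mathbb{Z}_{>0}$. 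As $\exp_E$ is entire with $\exp_E(0)=0$ and $\xi_n\to 0$, continuity yields $\exp_E(\xi_n)\to 0$, so there is an $N$ with $|\exp_E(\xi_n)|_\infty<R_E$ for all $n>N$; for such $n$ we moreover have $\log_E(\exp_E(\xi_n))=\xi_n$.

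The key step is then to transport this back to $\xi$ via the functional equation of Theorem~\ref{Thm:Main_Thm_2}. Writing $\xi=\theta^n\xi_n$ and using that $\exp_E$ intertwines multiplication by $\partial\phi_{t^n}=\theta^n$ with $\phi_{t^n}$, we obtain $\exp_E(\xi)=\phi_{t^n}(\exp_E(\xi_n))$. Applying $\overset{\rightarrow}{\log_E}$ and iterating the congruence $\overset{\rightarrow}{\log_E}(\phi_t(\,\cdot\,))\equiv\theta\,\overset{\rightarrow}{\log_E}(\,\cdot\,)~(\mathrm{mod}~\Lambda_E)$ exactly $n$ times (legitimate since $\phi_{t^n}=\phi_t^{\circ n}$ and $\overset{\rightarrow}{\log_E}$ is additive) gives
\[
    \overset{\rightarrow}{\log_E}(\exp_E(\xi))\equiv\theta^n\,\overset{\rightarrow}{\log_E}(\exp_E(\xi_n))\quad(\mathrm{mod}~\Lambda_E),
\]
where the congruence survives each iteration because $\Lambda_E$ is an $A$-module, so $\theta^j\Lambda_E\subseteq\Lambda_E$. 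For $n>N$ the factor on the right equals $\log_E(\exp_E(\xi_n))=\xi_n$ modulo $\Lambda_E$ by the preceding paragraph, and hence $\overset{\rightarrow}{\log_E}(\exp_E(\xi))\equiv\theta^n\xi_n=\xi~(\mathrm{mod}~\Lambda_E)$, once more invoking $A$-stability of $\Lambda_E$. This is precisely the required congruence.

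I expect the main obstacle to be the bookkeeping in the scaling argument rather than any isolated hard estimate: one must verify that $n$ may be taken large enough that $\exp_E(\xi_n)$ lands inside the convergence radius $R_E$ (so that both Theorem~\ref{Thm:Main_Thm_1}(1) and the formal identity $\log_E\circ\exp_E=\mathrm{id}$ apply at the value $\xi_n$), and that the $n$-fold iteration of the functional equation is valid, which rests exactly on the $A$-module structure of $\Lambda_E$ together with the additivity of $\overset{\rightarrow}{\log_E}$. Once these two points are secured, the conclusion that $\overset{\rightarrow}{\log_E}$ is the inverse of $\overset{\rightarrow}{\exp_E}$ follows immediately from the bijectivity of $\overset{\rightarrow}{\exp_E}$.
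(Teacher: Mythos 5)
Your proposal is correct and follows essentially the same route as the paper's proof: scaling by $\xi_n=\xi/\theta^n$ so that $\exp_E(\xi_n)$ falls within the radius $R_E$, iterating the functional equation of Theorem~\ref{Thm:Main_Thm_2}, applying Theorem~\ref{Thm:Main_Thm_1}(1) at $\xi_n$, and concluding via the surjectivity of $\exp_E$ with kernel $\Lambda_E$. Your treatment is if anything slightly more careful than the paper's, making the mod-$\Lambda_E$ bookkeeping and the reduction to a one-sided inverse explicit.
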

    
    \begin{proof}
        The proof follows the same spirit as the proof of \cite[Thm.~2.1.3.(2)]{Fur20}. Let $\xi\in\mathbb{C}_\infty$. Consider the sequence $\xi_n:=\xi/\theta^n$ for each $n\in\mathbb{Z}_{>0}$. Since $\lim_{n\to\infty}\xi_n=0$ and $\exp_E$ are continuous, there exists $N>0$ such that $|\exp_E(\xi_n)|_\infty<R_E$ for each $n>N$. Then, for a fixed $n>N$, we have
        \begin{align*}
            \overset{\rightarrow}{\log_E}\circ\exp_E(\xi)&=\overset{\rightarrow}{\log_E}\circ\exp_E(\theta^n\xi_n)\\
            &=\overset{\rightarrow}{\log_E}\circ\phi_{t^n}\circ\exp_E(\xi_n)\\
            &=\partial\phi_{t^n}\circ\overset{\rightarrow}{\log_E}\circ\exp_E(\xi_n)\\
            &=\theta^n\log_E(\exp_E(\xi_n))\\
            &=\xi.
        \end{align*}
        Since $\exp_E:\mathbb{C}_\infty\to\mathbb{C}_\infty$ is surjective with kernel $\Lambda_E$, we conclude that $\overset{\rightarrow}{\log_E}$ is the inverse of $\overset{\rightarrow}{\exp_E}$.
    \end{proof}

\bibliographystyle{alpha}

\end{document}